\newcommand{\KK}{\mathbb{K}}
\newcommand{\PP}{\mathbb{P}}
\newcommand{\A}{\mathcal{A}}
\newcommand{\Aff}{\mathbb{A}}
\newcommand{\RR}{\mathbb{R}}
\newcommand{\NN}{\mathbb{N}}
\newcommand{\CO}{\mathcal{O}}
\newcommand{\I}{\mathcal{I}}
\newcommand{\K}{\mathcal{K}}
\newcommand{\mcL}{\mathcal{L}}
\newcommand{\mcP}{\mathcal{P}}
\newcommand{\Q}{\mathcal{Q}}
\newcommand{\R}{\mathcal{R}}
\newcommand{\GR}{\G(2,n)}
\newcommand{\G}{G}
\DeclareMathOperator{\Def}{Def}
\DeclareMathOperator{\st}{st}
\DeclareMathOperator{\Hom}{Hom}
\DeclareMathOperator{\spec}{Spec}
\DeclareMathOperator{\Proj}{Proj}
\DeclareMathOperator{\Circ}{circ}
\DeclareMathOperator{\Der}{Der}
\DeclareMathOperator{\depth}{depth}
\newtheorem{thm}{Theorem}[section]
\newtheorem{prop}[thm]{Proposition}
\newtheorem{lemma}[thm]{Lemma}
\newtheorem{cor}[thm]{Corollary}
\theoremstyle{definition}
\newtheorem{rem}[thm]{Remark}
\newtheorem{case}{Case}
\newtheorem{ex}[thm]{Example}
\title[Degenerations for Dual Quotient Bundles]{Unobstructed Stanley-Reisner Degenerations for Dual Quotient Bundles on $\GR$}
\author{Nathan Ilten}
\address{Department of Mathematics, Simon Fraser University,
8888 University Drive, Burnaby BC V5A1S6, Canada}
\email{nilten@sfu.ca}
\thanks{The first author was partially supported by an NSERC Discovery Grant.} 
\author{Charles Turo}
\address{Department of Mathematics, Simon Fraser University,
8888 University Drive, Burnaby BC V5A1S6, Canada}
\email{cturo@sfu.ca}
\thanks{The second author was partially supported by an NSERC USRA Fellowship.}
\newcommand{\drawksix}{
\begin{tikzpicture}[every node/.style={draw,shape=circle}]
\path (0,0) node (t1) {$x_{13}$}
(4,0) node (t2) {$x_{15}$}
(8,0) node (t3) {$x_{35}$}
(0,-2) node (b1) {$x_{46}$}
(4,-2) node (b2) {$x_{24}$}
(8,-2) node (b3) {$x_{26}$}
(12,0) node[fill=lightgray] (t4) {$x_{13}$}
(12,-2) node[fill=lightgray] (b4) {$x_{46}$}
(2,-1) node (m1) {$x_{14}$}
(6,-1) node (m2) {$x_{25}$}
(10,-1) node (m3) {$x_{36}$}
(6,4) node (y1) {$y_1$}
(6,-5) node (yn) {$y_6$}
(5,2) node (x) {$x_{16}$};
\draw[name path=bottom] (yn) -- (t1) 
(yn) -- (t2)
(yn) -- (t3)
(yn) -- (t4)
(yn) -- (m1)
(yn) -- (m2)
(yn) -- (m3)
(yn) -- (b1)
(yn) -- (b2)
(yn) -- (b3)
(yn) -- (b4);
\draw[name path=middle] (t1) -- (t2)
(t2) -- (t3)
(t3) -- (t4)
(b1) -- (b2)
(b2) -- (b3)
(b3) -- (b4)
(t1) -- (b1)
(t2) -- (b2)
(t3) -- (b3)
(t4) -- (b4)
(m1) -- (t1)
(m1) -- (b1)
(m1) -- (t2)
(m1) -- (b2)
(m2) -- (t2)
(m2) -- (b2)
(m2) -- (t3)
(m2) -- (b3)
(m3) -- (t3)
(m3) -- (b3)
(m3) -- (t4)
(m3) -- (b4);
\fill[white,name intersections={of=middle and bottom,total=\t}]
    \foreach \s in {1,...,\t}{(intersection-\s) circle (4pt) {}};
\draw[name path=middle] (t1) -- (t2)
(t2) -- (t3)
(t3) -- (t4)
(b1) -- (b2)
(b2) -- (b3)
(b3) -- (b4)
(t1) -- (b1)
(t2) -- (b2)
(t3) -- (b3)
(t4) -- (b4)
(m1) -- (t1)
(m1) -- (b1)
(m1) -- (t2)
(m1) -- (b2)
(m2) -- (t2)
(m2) -- (b2)
(m2) -- (t3)
(m2) -- (b3)
(m3) -- (t3)
(m3) -- (b3)
(m3) -- (t4)
(m3) -- (b4);
\draw[name path=topa] 
(x) -- (t1) 
(x) -- (t2)
(x) -- (t3)
(x) -- (m1)
(x) -- (m2)
(x) -- (b2);
\fill[white,name intersections={of=topa and bottom,total=\t}]
    \foreach \s in {1,...,\t}{(intersection-\s) circle (4pt) {}};
\fill[white,name intersections={of=topa and middle,total=\t}]
    \foreach \s in {1,...,\t}{(intersection-\s) circle (4pt) {}};
\draw[name path=topa] 
(x) -- (t1) 
(x) -- (t2)
(x) -- (t3)
(x) -- (m1)
(x) -- (m2)
(x) -- (b2);
\draw[name path=topb] (y1) -- (t1) 
(y1) -- (x)
(y1) -- (t3)
(y1) -- (t4)
(y1) -- (m1)
(y1) -- (m2)
(y1) -- (m3)
(y1) -- (b1)
(y1) -- (b2)
(y1) -- (b3)
(y1) -- (b4);
\fill[white,name intersections={of=topb and bottom,total=\t}]
    \foreach \s in {1,...,\t}{(intersection-\s) circle (4pt) {}};
\fill[white,name intersections={of=topb and middle,total=\t}]
    \foreach \s in {1,...,\t}{(intersection-\s) circle (4pt) {}};
\fill[white,name intersections={of=topb and topa,total=\t}]
    \foreach \s in {1,...,\t}{(intersection-\s) circle (4pt) {}};
\fill[white] (5.4,2.2) circle (3pt) {};
\fill[white] (5.2,1.7) circle (3pt) {};

\draw[name path=topb] (y1) -- (t1) 
(y1) -- (x)
(y1) -- (t3)
(y1) -- (t4)
(y1) -- (m1)
(y1) -- (m2)
(y1) -- (m3)
(y1) -- (b1)
(y1) -- (b2)
(y1) -- (b3)
(y1) -- (b4);
\draw (x)--(b2);
\end{tikzpicture}
}
\newcommand{\drawasix}{
\begin{tikzpicture}[every node/.style={draw,shape=circle}]
\path (0,0) node (t1) {$x_{13}$}
(4,0) node (t2) {$x_{15}$}
(8,0) node (t3) {$x_{35}$}
(0,-2) node (b1) {$x_{46}$}
(4,-2) node (b2) {$x_{24}$}
(8,-2) node (b3) {$x_{26}$}
(12,0) node[fill=lightgray] (t4) {$x_{13}$}
(12,-2) node[fill=lightgray] (b4) {$x_{46}$}
(2,-1) node (m1) {$x_{14}$}
(6,-1) node (m2) {$x_{25}$}
(10,-1) node (m3) {$x_{36}$};
\draw (t1) -- (t2)
(t2) -- (t3)
(t3) -- (t4)
(b1) -- (b2)
(b2) -- (b3)
(b3) -- (b4)
(t1) -- (b1)
(t2) -- (b2)
(t3) -- (b3)
(t4) -- (b4)
(m1) -- (t1)
(m1) -- (b1)
(m1) -- (t2)
(m1) -- (b2)
(m2) -- (t2)
(m2) -- (b2)
(m2) -- (t3)
(m2) -- (b3)
(m3) -- (t3)
(m3) -- (b3)
(m3) -- (t4)
(m3) -- (b4);
\end{tikzpicture}
}
\begin{document}

\begin{abstract}
Let $\Q^*$ denote the dual of the quotient bundle on the Grassmannian $G(2,n)$. We prove that the ideal of $\Q^*$ in its natural embedding has initial ideal equal to the Stanley-Reisner ideal of a certain unobstructed simplicial complex. Furthermore, we show that the coordinate ring of $\Q^*$ has no infinitesimal deformations for $n>5$. 
\end{abstract}
\maketitle

\section{Introduction}
Let $\GR$ denote the Grassmannian parametrizing $2$-dimensional linear subspaces of an $n$-dimensional vector space, and let $I_{2,n}$ be the ideal of this variety in its Pl\"ucker embedding. Denote by $\A_n$ the boundary complex of the dual polytope of the $n$-associahedron. In \cite[Proposition 3.7.4]{sturmfels:08a}, Sturmfels shows that the Stanley-Reisner ideal associated to the join of  $\A_n$ with an $(n-1)$-dimensional simplex is an initial ideal of $I_{2,n}$.
As the boundary complex of a polytope, $\A_n$ has the nice property that it is pure-dimensional and  topologically a sphere. Furthermore, Christophersen and the first author have shown \cite[Theorem 5.6]{ilten:14a} that $\A_n$ is \emph{unobstructed}, that is, the second cotangent cohomology of the associated Stanley-Reisner ring vanishes. This fact can be used to construct new degenerations of $\GR$ to certain toric varieties.

In \cite{ilten:14a} it is also shown that a similar situation holds for a number of other classical varieties, including the orthogonal Grassmannian $SO(5,10)$ of isotropic $5$-planes in a $10$-dimensional vector space. However, no such construction is apparent for other orthogonal Grassmannians. In this paper we generalize in a different direction. The key observation is that the coordinate ring of $SO(5,10)$ in its Pl\"ucker embedding is in fact a deformation of the coordinate ring of the dual of the tautological quotient bundle on $\G(2,5)$, embedded in $\PP^9\times \Aff^5$.

Let $\Q$ denote the tautological quotient bundle on $\GR$, and $\Q^*$ its dual, see \S \ref{sec:Q}. The bundle $\Q^*$ comes with a natural embedding in $\GR\times \Aff^n$, and hence (after composition with the Pl\"ucker embedding) an embedding in $\PP^{ { {n}\choose{2}}-1}\times\Aff^n$. Let $J_n$ denote the ideal of $\Q^*$ in this embedding. Our first main result (Theorem \ref{thm:1}) is to show that under an appropriate term order, the initial ideal of $J_n$ is the Stanley-Reisner ideal associated to the join of a $2n-4$-dimensional simplex with a simplicial complex $\K_n$. This complex $\K_n$ is obtained from the bipyramid over $\A_n$ via a stellar subdivision, see \S \ref{sec:assoc} for details. It shares many of the nice properties of $\A_n$: topologically it is a sphere, and by a result of \cite{ilten:12a} it is also unobstructed.

Motivated by the fact that for $n=5$, the coordinate ring of $\Q^*$ deforms to that of $SO(5,10)$, we consider the deformation theory of the coordinate ring $S_n/J_n$ of $\Q^*$. Our second main result (Theorem \ref{thm:2}) is that $T^1(S_n/J_n)=0$ if $n>5$, that is, $S_n/J_n$ has no infinitesimal deformations. The proof of this fact relies heavily on our first result. Along the way, we describe the syzygies of $J_n$ in \S \ref{sec:syz}, and 
obtain an algebraic proof of the result of Svanes \cite{svanes:75a} that the coordinate ring of $\GR$ has no infinitesimal deformations for $n\geq 5$. 

Our motivation for the above two results comes from our desire to better understand higher-dimensional Fano varieties, along with their degenerations to toric varieties.
The bundle $\Q^*$ is related in a natural fashion to two projective varieties: its projectivization $\PP(\Q^*)$, and the closure of $\Q^*$ in an appropriate projective space, see \S \ref{sec:geom}. Complete intersections of sufficiently low degree in both these varieties are Fano, and they come equipped with degenerations to unobstructed Stanley-Reisner schemes due to our result above on the initial ideal of $J_n$. This can be used as in \cite{ilten:14a} to find toric degenerations of these Fano varieties. Moreover, we are able to use our rigidity result on $S_n/J_n$ to show that both the projectivization and the projective closure of $\Q^*$ are rigid, and that any class of hypersurfaces in one of these varieties is closed under deformation, see Theorem \ref{thm:g}.

\subsection*{Acknowledgements} We thank Jan Christophersen for answering several questions.
\section{Preliminaries}
\subsection{Simplicial complexes and Stanley-Reisner ideals}
Let $V$ be any finite set, and $\mcP(V)$ its power set. An abstract \emph{simplicial complex} on $V$ is any subset $\K\subset\mcP(V)$ such that if $f\in\K$ and $g\subset f$, then $g\in \K$. Elements $f\in\K$ are called \emph{faces}; the dimension of a face $f$ is $\dim f:=\#f-1$. Zero-dimensional faces are called \emph{vertices}; one-dimensional faces are called \emph{edges}. By $\Delta_n$ we denote $n$-dimensional simplex, the power set of $\{0,\ldots,n\}$.
By $\Delta_{-1}$ we denote the empty set. By $\partial \Delta_n$ we denote the boundary of $\Delta_n$, that is, $\partial \Delta_n=\Delta_n\setminus \{0,\ldots,n\}$.

Fix an algebraically closed field $\KK$. Let $\K\subset\mcP(V)$ be any simplicial complex. Its \emph{Stanley-Reisner ideal} is the square-free monomial ideal $I_\K\subset \KK\left[x_i\ |\ i\in V\right]$
$$
I_\K:=\langle x_p \ | \ p\in\mcP(V)\setminus\K\rangle
$$
where for $p\in\mcP(V)$, $x_p:=\prod_{i\in p}x_i$. 
This gives rise to the \emph{Stanley-Reisner ring} $A_\K:=\KK\left[x_i\ |\ i\in V\right]/I_\K$.
We refer to \cite{stanley:83a} for more details on Stanley-Reisner theory.

We will be interested in two operations on simplicial complexes.
Given simplicial complexes $\K$ and $\mcL$ on disjoint sets, their \emph{join} is the simplicial complex
$$
\K * \mcL=\{f\cup g\ | \ f\in\K,\ g\in\mcL\}.
$$
Given a simplicial complex $\K$ and a face $f\in\K$ of dimension at least one,
the \emph{stellar subdivision} of $\K$ at $f$ is the simplicial complex
$$
\st(f,\K)=\{g\in\K\ |\ f\not\subseteq g\}\cup\{g\cup v\in\K*\Delta_0\ |\ f\not\subseteq g\ \textrm{and}\ f\cup g\in\K\}
$$
contained in $\K*\Delta_0$, where $v$ is the single vertex of $\Delta_0$. Roughly speaking, stellar subdivision at $f$ splits the $k$-simplex $f$ into $k+1$ new $k$-simplices by inserting the vertex $v$. Any simplex of $\K$ containing $f$ is replaced by $k+1$ copies containing the new pieces of $f$. We will especially be interested in the case when $f$ is an edge. If $\K'$ is a simplicial complex obtained from $\K$ by stellar subdivision at some edge, we say that $\K$ is an \emph{edge-unstarring} of $\K'$.

\subsection{Associahedra and related complexes}\label{sec:assoc}
We will now consider some simplicial complexes related to triangulations of the $n$-gon.
Consider the $n$-gon and index the vertices in cyclical order by $i =
1, \dots , n$. Denote by $\delta_{ij}$ the diagonal between vertex $i$
and vertex $j$, and let $V_n$ be the set of all $\frac{1}{2} n (n-3)$ diagonals.
We now define the simplicial complex $\A_n$ to be the complex on $V_n$ whose faces are sets of pairwise non-crossing diagonals. Hence,  $r$-dimensional faces of $\A_n$ corresponds to a partition of the $n$-gon into a union of $r+2$ polygons. In particular, maximal faces correspond to triangulations of the $n$-gon.
This simplicial complex is the boundary complex of the dual polytope of the associahedron, which plays a role in
many fields. For more details, see e.g. \cite{lee:89a} and the introduction in \cite{hohlweg:07a}.
For small $n$ we have $\A_3 = \{\emptyset\}$, $\A_4 $ is two vertices
with no edge, and $\A_5$ is the boundary of the
pentagon.

It is straightforward to describe the Stanley-Reisner ideal $I_{\A_n}$. The ideal 
$I_{\A_n}$ is generated by the $n\choose 4$ quadrics of the form $x_{ik}x_{jl}$ for $1\leq i < j < k < l\leq n$, corresponding to crossing diagonals in the $n$-gon. Here, for ease of notation, we have written $x_{ik}$ for the variable $x_{\delta_{ik}}$.

\begin{ex}\label{ex:1}
We picture the edges of the simplicial complex $\A_6$ in Figure \ref{fig:a6}. In order to more easily draw this complex, we have positioned the vertices $x_{13}$ and $x_{46}$ both on the left and right hand sides of the illustration. Note that the complex $\A_n$ is a \emph{flag complex} (minimal non-faces are one-dimensional) and is hence determined by its edges.
\end{ex}

\begin{figure}
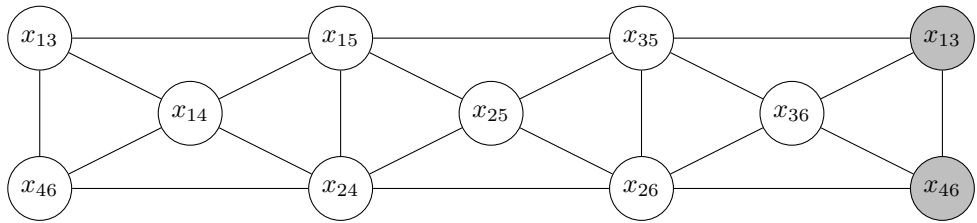

\drawasix
\caption{The simplicial complex $\A_6$}\label{fig:a6}
\end{figure}

We now define a related complex which plays a central role in this paper. Fix a natural number $n\geq 5$, and let $S^0$ be the simplicial complex with vertices $w_1,w_2$ and no edge. We define the complex $\K_n$ as
\[
\K_n=\st(f,\A_n*S^0)
\]
where $f$ is the edge of $\A_n*S^0$ with vertices $\delta_{1(n-1)}$ and $w_1$.

\begin{ex}
In Figure \ref{fig:k6}, we picture the edges of the simplicial complex $\K_6$. As in Example \ref{ex:1}, we draw the vertices $x_{13}$ and $x_{46}$ on both the left and right to facilitate illustration. The complex $\K_6$ is again determined by the edges pictured here, since $\K_n$ is also a flag complex.
\end{ex}
\begin{figure}
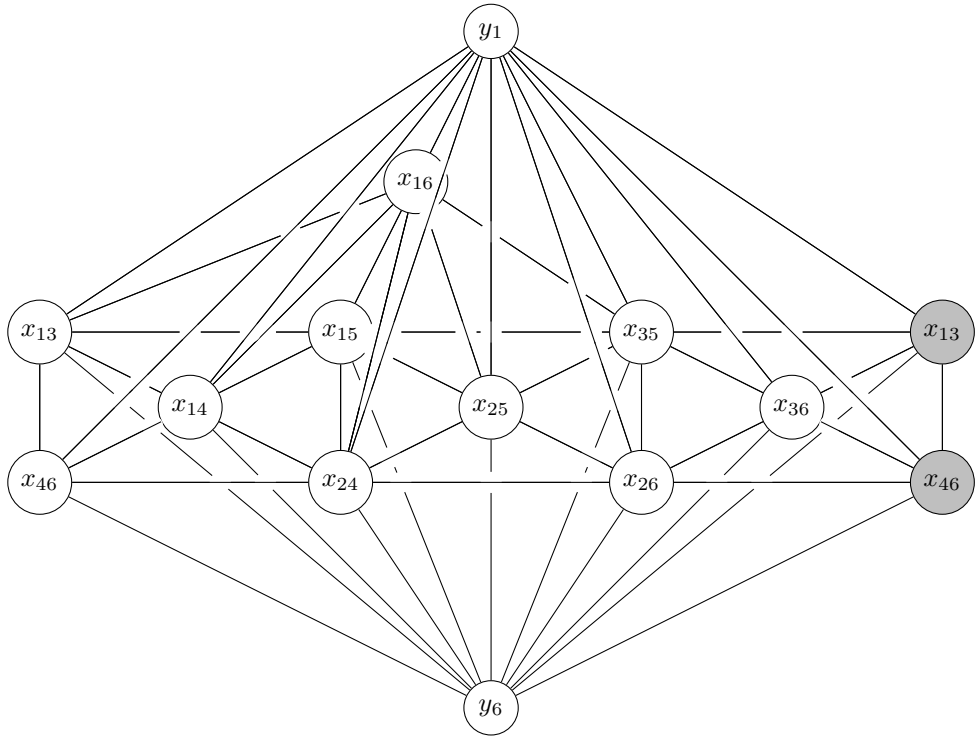

\drawksix
\caption{The simplicial complex $\K_6$}\label{fig:k6}
\end{figure}

It is also straightforward to describe the Stanley-Reisner ideal $I_{\K_n}$.
 Set $y_{1}=x_{w_1}$, $y_n=x_{v}$,  $x_{1n}=x_{w_2}$, where $v$ is the new vertex from the stellar subdivision.
\begin{lemma}
The ideal 
$I_{\K_n}$ is generated by the $n\choose 4$ quadrics of the form $x_{ik}x_{jl}$ for $1\leq i < j < k < l\leq n$, together with the quadrics 
\[
x_{1(n-1)}y_1,\ x_{1n}y_1,\ x_{1n}y_n,\ x_{2n}y_n,\ \ldots,\ x_{(n-2)n}.
\]
\end{lemma}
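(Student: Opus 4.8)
The plan is to prove the lemma by tracking minimal non-faces through the two operations used to build $\K_n$, since the minimal generators of a Stanley--Reisner ideal correspond exactly to the (inclusion-minimal) non-faces. First I would record that $f=\{\delta_{1(n-1)},w_1\}$ really is an edge of $\A_n*S^0$: since $n\geq 5$, the pair $\{1,n-1\}$ is non-adjacent in the $n$-gon, so $\delta_{1(n-1)}$ is a genuine diagonal and $\{\delta_{1(n-1)}\}\in\A_n$, while $\{w_1\}\in S^0$; thus the stellar subdivision is defined. Next, using the standard fact that the Stanley--Reisner ideal of a join of complexes on disjoint vertex sets is the sum of the two ideals, and that $S^0$ has the unique minimal non-face $\{w_1,w_2\}$, the minimal non-faces of $\A_n*S^0$ are exactly the $\binom n4$ crossing pairs $\{\delta_{ik},\delta_{jl}\}$ (for $1\leq i<j<k<l\leq n$) together with $\{w_1,w_2\}$; in particular $\A_n*S^0$ is flag.

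The core step is to describe how passing to $\st(f,\K)$ changes the minimal non-faces, for a flag complex $\K$ and an edge $f=\{a,b\}\in\K$ with new vertex $v$. Directly from the definition of $\st(f,\K)$ I would verify: (i) $f$ itself becomes a minimal non-face (its proper subsets $\{a\},\{b\},\emptyset$ survive); (ii) every minimal non-face $p$ of $\K$ with $f\not\subseteq p$ remains a minimal non-face of $\st(f,\K)$, because no subset of $p$ contains $f$, hence all proper subsets of $p$ stay faces while $p$ stays a non-face; (iii) a face $\sigma$ of $\K$ with $f\subseteq\sigma$ and $v\notin\sigma$ is no longer a face, but it is not a minimal non-face since it contains $f$; and (iv) the minimal non-faces containing $v$ are precisely the edges $\{v,c\}$ such that $\{a,c\}$ or $\{b,c\}$ is a non-edge of $\K$. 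For (iv), if $\{v\}\cup g$ is a minimal non-face then $g\in\K$, $f\cup g\notin\K$, and $f\cup g'\in\K$ for every $g'\subsetneq g$; since $\K$ is flag and $\{a,b\}$ is an edge, $f\cup g\notin\K$ forces some pair $\{a,c\}$ or $\{b,c\}$ with $c\in g$ to be a non-edge, and then if $|g|\geq2$ the set $g'=g\setminus\{x\}$ with $x\neq c$ already contains that non-edge, contradicting $f\cup g'\in\K$; hence $g=\{c\}$. In particular $\st(f,\K)$ is again flag, so $\K_n$ is flag (as was claimed in the text).

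It then remains to read off the relevant non-edges of $\A_n*S^0$ at $a=\delta_{1(n-1)}$ and $b=w_1$. Through $w_1$ the only non-edge is $\{w_1,w_2\}$. Through $\delta_{1(n-1)}$ the non-edges are the pairs $\{\delta_{1(n-1)},\delta_{jn}\}$ with $\delta_{jn}$ crossing $\delta_{1(n-1)}$; the diagonal $\delta_{1(n-1)}$ cuts off the arcs $\{n\}$ and $\{2,\dots,n-2\}$, so the crossing diagonals are exactly $\delta_{2n},\delta_{3n},\dots,\delta_{(n-2)n}$. Combining this with (i)--(iv), the minimal non-faces of $\K_n$ are: the $\binom n4$ crossing pairs (from (ii)); $\{w_1,w_2\}$ (from (ii)); $f=\{\delta_{1(n-1)},w_1\}$ (from (i)); and $\{v,w_2\}$, $\{v,\delta_{2n}\},\dots,\{v,\delta_{(n-2)n}\}$ (from (iv)). Translating with $y_1=x_{w_1}$, $y_n=x_v$, $x_{1n}=x_{w_2}$, and writing $x_{ik}$ for $x_{\delta_{ik}}$, these become precisely the $\binom n4$ quadrics $x_{ik}x_{jl}$ together with $x_{1(n-1)}y_1$, $x_{1n}y_1$, $x_{1n}y_n$, $x_{2n}y_n,\dots,x_{(n-2)n}y_n$, which is the asserted generating set.

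I do not expect a genuine obstacle here; the only place needing care is the bookkeeping in the general stellar-subdivision step, namely checking that no higher-dimensional minimal non-face is created (the flag argument in (iv)) and that no old minimal non-face is destroyed (which holds because none of them contains $f$, as $f$ uses the vertex $w_1\notin V_n$ and is not itself one of the listed non-faces of $\A_n*S^0$).
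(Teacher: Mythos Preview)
Your proof is correct and follows the same route as the paper: compute the minimal non-faces of $\A_n*S^0$ from those of the factors, then track how the stellar subdivision at the edge $f$ changes them. The paper's argument is a two-sentence assertion of the outcome, whereas you supply the underlying combinatorial lemma (items (i)--(iv)) describing precisely how stellar subdivision at an edge of a flag complex affects the minimal non-faces; this extra generality also recovers the flagness of $\K_n$, which the paper uses elsewhere without proof.
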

\begin{proof}
The minimal non-faces of $\A_n*S^0$ are exactly the minimal non-faces of $\A_n$, together with the non-face $\{w_1,w_n\}$. The stellar subdivision in $f$ preserves all of these non-faces, as well as creating the new minimal non-faces $\{w_2,v\}$, $\{\delta_{1,n-1},w_1\}$, and $\{\delta_{i,n},v\}$ for $i=2,\ldots n-2$.
\end{proof}

In additional to both being topological spheres, $\A_n$ and $\K_n$ share the additional important property of being unobstructed. For any $\K$-algebra $A$, let $T^i(A)$ denote the \emph{cotangent cohomology} module $T^i(A/\KK,A)$, see e.g.~\cite{andre:74a} for details on cotangent cohomology.
\begin{thm}[\cite{ilten:12a,ilten:14a}]\label{thm:unobstructed}
For $n\geq 5$, the simplicial complexes $\A_n$ and $\K_n$ are unobstructed, that is, 
\[T^2({A_{\A_n})}=T^2(A_{\K_n})=0.\]
\end{thm}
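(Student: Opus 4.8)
The plan is to prove the two vanishing statements by reducing to known results about the associahedral complex. The key structural input is that $\A_n$ is unobstructed for $n \geq 5$ (this is \cite[Theorem 5.6]{ilten:14a}), and that $\K_n$ is obtained from $\A_n * S^0$ by a single stellar subdivision at an edge. For the $\A_n$ statement, there is nothing new to do beyond citing \cite{ilten:14a}. For the $\K_n$ statement, I would split the argument into two steps: first understand $T^2$ of the join $\A_n * S^0$, then understand how $T^2$ transforms under stellar subdivision at an edge.

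For the first step, I would use the fact that cotangent cohomology of a join of Stanley-Reisner rings decomposes in terms of the cotangent cohomology of the factors. Since $S^0$ is the Stanley-Reisner complex of two points with no edge — that is, $A_{S^0} = \KK[y_1, x_{1n}]/(y_1 x_{1n})$, a node — one has a good handle on $T^i(A_{S^0})$ (the node is a hypersurface singularity, so $T^i$ vanishes for $i \geq 2$, and $T^1$ is one-dimensional). Combined with $T^2(A_{\A_n}) = 0$ and, crucially, knowing enough about $T^1(A_{\A_n})$, the Künneth-type formula for cotangent cohomology of joins (see e.g. the discussion in \cite{ilten:12a}) should yield $T^2(A_{\A_n * S^0}) = 0$. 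One must be a little careful: joins can create obstructions even when the factors are unobstructed if there is interaction between $T^1$ of one factor and $T^1$ or $T^0$ of the other, so this step needs the explicit structure of $T^1(A_{\A_n})$, which is classically understood (the first-order deformations of the associahedral complex correspond to the edges one can "unstar").

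For the second step, I would invoke the main result of \cite{ilten:12a}, which says precisely that unobstructedness is preserved under edge stellar subdivision — equivalently, $\A_n * S^0$ being unobstructed implies its edge stellar subdivision $\K_n = \st(f, \A_n * S^0)$ is unobstructed. This is exactly the scenario that paper is built for, so the citation should be essentially immediate once the first step is in place. The only thing to verify is the bookkeeping hypothesis: that $f$ is genuinely an edge of $\A_n * S^0$ (it is, since $\{\delta_{1(n-1)}\}$ is a vertex of $\A_n$ and $w_1$ a vertex of $S^0$, and any vertex of $\A_n$ together with either vertex of $S^0$ forms a face of the join), and that the result of \cite{ilten:12a} applies in the stated generality to complexes of this dimension.

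The main obstacle I anticipate is the first step: controlling $T^2$ of the join carefully enough. The formula for cotangent cohomology of a join is not simply additive; it involves terms mixing $T^{i}$ and $T^{j}$ of the factors with $i + j = 2$, including the contribution of $T^1(A_{\A_n}) \otimes T^1(A_{S^0})$ and the $T^0$-modules. I would need to check that each such cross term vanishes or is absorbed, which requires knowing not just that $T^2(A_{\A_n}) = 0$ but also the precise module structure of $T^1(A_{\A_n})$ over $A_{\A_n}$ — and in particular that it is concentrated in the right degrees so that tensoring against the node's deformation module produces nothing in the relevant graded pieces. If the join formula turns out to be delicate, an alternative is to realize $\A_n * S^0$ directly as a bipyramid-type complex and compute $T^2$ combinatorially via the link/deletion exact sequences, localizing at each face and using that all links and deletions are cones or unobstructed complexes. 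Either way, once $T^2(A_{\A_n * S^0}) = 0$ is established, \cite{ilten:12a} finishes the proof.
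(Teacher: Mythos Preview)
Your treatment of $\A_n$ matches the paper's: both simply cite \cite[Theorem 5.6]{ilten:14a}. For $\K_n$, however, you take a different route than the paper does.

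The paper's argument does not pass through the join $\A_n * S^0$ at all. Instead it observes that $\K_n$ is one of the intermediate complexes appearing in the explicit chain of $n-3$ edge-unstarrings from $\A_{n+1}$ down to $\A_n * \A_4$ that is analyzed in \cite[Theorem 6.2]{ilten:12a}; that theorem already asserts unobstructedness of every complex in that chain, so one simply identifies $\K_n$ as the penultimate one and is done. In other words, the paper starts from $\A_{n+1}$ and goes \emph{down}, whereas you start from $\A_n * S^0$ and go \emph{up}.

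Your route can likely be made to work, but it carries two avoidable burdens. First, your Step~1 is more delicate than necessary: the K\"unneth-type analysis of $T^2$ of a join, with its cross terms $T^1\otimes T^1$, is exactly the subtlety you flag, but it is bypassed entirely by \cite[Proposition 2.3]{ilten:14a}, which states directly that the join of unobstructed complexes is unobstructed (and $S^0=\A_4$ is unobstructed since its Stanley--Reisner ring is a hypersurface). Second, your Step~2 assumes that \cite{ilten:12a} contains a general theorem of the form ``edge stellar subdivision preserves unobstructedness.'' The paper does not invoke such a general statement; it invokes a result about the \emph{specific} sequence of complexes interpolating between $\A_{n+1}$ and $\A_n*\A_4$. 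If \cite{ilten:12a} only proves unobstructedness for that particular family (possibly under hypotheses you would need to verify), then your formulation over-claims what the reference provides, and you would need to check those hypotheses for $\A_n*S^0$ and the chosen edge $f$. The paper's approach sidesteps both issues by a single identification.
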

\begin{proof}
For $\A_n$, this is \cite[Theorem 5.6]{ilten:14a}. For $\K_n$, this follows from \cite[Theorem 6.2]{ilten:12a}. Indeed, the complex $\K_n$ arises as the  final intermediate simplicial complex in the sequence of $n-3$ edge-unstarrings going from $\A_{n+1}$ to $\A_{n}*\A_{4}$ as described in loc.~cit.
\end{proof}

\subsection{Equations for $\GR$ and $\Q^*$}\label{sec:Q}
For $n\geq 4$, let $I_{2,n}$ denote the ideal of the Grassmannian $\GR$ in its Pl\"ucker embedding. 
Recall that if $x_{ij}$ $1\leq i<j\leq n$ are the standard Pl\"ucker coordinates, 
$I_{2,n}$ is generated by the $4\times 4$ Pfaffians of the $n\times n$ skew-symmetric matrix with coordinates
\begin{equation}\label{eqn:matrix}
\left(\begin{array}{c c c c c}
0&x_{12}&x_{13}&\cdots&x_{1n}\\
-x_{12}&0&x_{23}&\cdots&x_{2n}\\
-x_{13}&-x_{23}&0&\cdots&x_{3n}\\
\vdots&\vdots&\vdots&\ddots&\vdots\\
-x_{1n}&-x_{2n}&-x_{3n}&\cdots&0
\end{array}
\right),
\end{equation}
see e.g. \cite[\S 3]{sturmfels:08a}.
For $i< j < k < l$ we denote the Pfaffian involving the rows $i,j,k,l$ by $\Phi_{ijkl}$. Explicitly, we have
\[
\Phi_{ijkl}=x_{ij}x_{kl}-x_{ik}x_{jl}+x_{il}x_{jk}.
\]

Consider any monomial order $\prec_{\Circ}$ which, for $1\leq i<j<k<l\leq n$, selects the monomial $x_{ik}x_{jl}$ as the lead term in the Pfaffian $\Phi_{ijkl}$. Such terms orders exist and are called \emph{circular}; the Pfaffians form a Gr\"obner basis for these term orders.  The initial ideal of $I_{2,n}$ is exactly the Stanley-Reisner ideal of $\A_n*\Delta_{n-1}$, where the vertices of $\Delta_{n-1}$ correspond to the variables $x_{1n},x_{12},x_{23},\ldots,x_{(n-1)n}$.
 See \cite[Proposition 3.7.4]{sturmfels:08a} for details.

On $\G(2,n)$, there is a tautological sequence of vector bundles:
\[
\begin{CD}
0 @>>> \R @>>> \CO_{\GR}^n @>>> \Q @>>> 0.
\end{CD}
\]
$\R$ is the \emph{tautological subbundle} of rank $2$ whose fiber at a point $p\in \GR$ is exactly the $2$-plane parametrized by the point $p$. Each such $2$-plane sits inside $\KK^n$, leading to the inclusion $\R\hookrightarrow \CO_{\GR}^n$.
The \emph{tautological quotient bundle} $\Q$ is the rank $n-2$ bundle given fiberwise by the quotient of $\KK^n$ by the $2$-plane parametrized by a point $p$. Taking the dual of the above exact sequence gives an inclusion of the \emph{dual quotient bundle} $\Q^*$ in ${(\CO_{\GR}^n)}^*$. Hence, we can geometrically realize the bundle $\Q^*$ in $G(2,n)\times \Aff^n$. After composing with the Pl\"ucker embedding, we obtain $\Q^*$ as a subvariety of $\PP^{ {n\choose 2}-1}\times\Aff^n$. We wish to describe the ideal of $\Q^*$ in this embedding. Later, our first main result (Theorem \ref{thm:1}) will be to show that under a suitable term order, its initial ideal equals the Stanley-Reisner ideal of $\K_n*\Delta_{2n-4}$.

As before we let $x_{ij}$ denote the standard Pl\"ucker coordinates on $\PP^{ {n\choose 2}-1}$; let $y_1,\ldots,y_n$ be coordinates for $\Aff^n$, and consider the ring
\[S_n= \KK[x_{ij}, y_1,\ldots,y_n]_{1\leq i < j \leq n}.\]
 Define quadrics $f_1,\ldots,f_n$ by 
\begin{equation*}
\left(\begin{array}{c c c c c}
0&x_{12}&x_{13}&\cdots&x_{1n}\\
-x_{12}&0&x_{23}&\cdots&x_{2n}\\
-x_{13}&-x_{23}&0&\cdots&x_{3n}\\
\vdots&\vdots&\vdots&\ddots&\vdots\\
-x_{1n}&-x_{2n}&-x_{3n}&\cdots&0
\end{array}
\right)\cdot
 \left(\begin{array}{c}
y_1\\
y_2\\
y_3\\
\vdots\\
y_n
\end{array}\right)
=
 \left(\begin{array}{c}
f_1\\
f_2\\
f_3\\
\vdots\\
f_n
\end{array}\right).
\end{equation*}
Let $J_n\subset S_n$ be the ideal generated by the $n\choose 4$ Pfaffians $\Phi_{ijkl}$ along with the $n$ additional quadrics $f_1,\ldots,f_n$.

\begin{prop}
The ideal of $\Q^*$ is $J_n$. In particular, $J_n$ is prime.
\end{prop}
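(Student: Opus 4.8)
The plan is to show two inclusions at the level of schemes: that $V(J_n)$ contains $\Q^*$ (embedded as described), and that every point of $V(J_n)$ lies on $\Q^*$. Since $\Q^*$ is a vector bundle over the irreducible variety $\GR$, it is irreducible and reduced of dimension $\dim \GR + \operatorname{rank}\Q^* = 2(n-2) + n$; so if we can identify $V(J_n)$ set-theoretically with $\Q^*$ and check that $J_n$ is already radical (or simply that $S_n/J_n$ is reduced of the right dimension), we are done, and primeness follows from irreducibility of $\Q^*$. I expect it will actually be cleanest to argue directly that $J_n$ \emph{equals} the (obviously prime) ideal of $\Q^*$, bypassing a separate radicality check.

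First I would write down the defining equations of $\Q^*$ intrinsically. A point of $\GR\times\Aff^n$ is a pair $(p,v)$ with $p$ a $2$-plane in $\KK^n$ and $v\in (\KK^n)^* = \KK^n$; it lies on $\Q^*$ precisely when $v$, viewed via the inclusion $\Q^*\hookrightarrow(\CO^n)^*$, annihilates the subspace $p$, equivalently when $v$ lies in the image of $\Q^*_p = (\KK^n/p)^* = p^\perp$. Concretely, if $p$ is spanned by rows $a,b\in\KK^n$, the condition is $\langle a,v\rangle = \langle b,v\rangle = 0$. The Pfaffians $\Phi_{ijkl}$ cut out $\GR$ in the Plücker coordinates, so I must check: on the cone over $\GR$, with $x_{ij}$ the Plücker coordinates of the $2$-plane spanned by $a,b$ (so $x_{ij} = a_ib_j - a_jb_i$), the linear forms $f_k = \sum_j x_{kj} y_j$ (with the skew-symmetric sign convention, $x_{jk} = -x_{kj}$) vanish at $(x,y)$ iff $\sum_j (a_kb_j - a_jb_k)y_j = 0$ for all $k$, i.e. iff $b_k\langle a, y\rangle - a_k\langle b, y\rangle = 0$ for all $k$, i.e. iff $y\perp a$ and $y\perp b$ (using that $a,b$ are linearly independent). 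This shows $\Q^* \subseteq V(J_n)$, and conversely that any closed point of $V(J_n)$ lying over a point of $\GR$ satisfies the bundle condition, giving the set-theoretic equality $V(J_n) = \Q^*$.

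It remains to upgrade this to the ideal-theoretic statement $J_n = I(\Q^*)$. The approach I would take is a dimension and degree count combined with Cohen--Macaulayness: via a circular term order as in the text, the $n\choose 4$ Pfaffians have leading terms $x_{ik}x_{jl}$ generating $I_{\A_n * \Delta_{n-1}}$, and — granting the forthcoming Theorem~\ref{thm:1} or reproving its combinatorial core here — the full set of $n\choose 4$ Pfaffians together with $f_1,\dots,f_n$ has initial ideal $I_{\K_n*\Delta_{2n-4}}$, whose Stanley--Reisner ring is a Cohen--Macaulay domain-free... more precisely, $\K_n$ is a sphere hence $A_{\K_n*\Delta_{2n-4}}$ is Cohen--Macaulay of the expected dimension $3n-3$. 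Then $S_n/\mathrm{in}_\prec(J_n)$ is Cohen--Macaulay, hence so is $S_n/J_n$, and it is equidimensional of dimension $3n-3 = \dim\Q^*$ with no embedded components; since it is generically reduced (being generically the coordinate ring of the smooth variety $\Q^*$) and unmixed, it is reduced. A reduced ideal with $V(J_n) = \Q^*$ irreducible must equal $I(\Q^*)$, which is prime.

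The main obstacle is the interdependence with Theorem~\ref{thm:1}: the clean Cohen--Macaulayness argument for reducedness leans on knowing the initial ideal, so to keep the logic non-circular I would instead prove this proposition first by a self-contained route — namely, show directly that $\{\Phi_{ijkl}\}\cup\{f_k\}$ is a Gröbner basis (all S-pairs reduce to zero) under $\prec_\Circ$ extended to the $y$ variables, that the resulting initial ideal is squarefree (hence $S_n/J_n$ is reduced, since an ideal whose initial ideal is radical is itself radical), and that its dimension is $3n-3$. The verification that the S-polynomials reduce, especially those between an $f_k$ and a Pfaffian, is the computational heart; the Plücker relations $\sum_j x_{kj} y_j$ interact with the Pfaffians via the classical identity expressing $x_{ik}\Phi_{jklm}$-type combinations, and I would organize this by grouping the $f_k$'s so that $\sum_k (\text{row } k \text{ of the Pfaffian matrix})\cdot f_k$ telescopes. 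Once the Gröbner basis property is in hand, reducedness, the dimension count, and the set-theoretic identification above combine to force $J_n = I(\Q^*)$, and primeness is immediate.
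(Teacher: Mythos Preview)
Your proposal is correct and matches the paper's approach almost exactly: the paper also does the set-theoretic identification via the rowspan/annihilator description, then forward-references Theorem~\ref{thm:1} to get Cohen--Macaulayness from the squarefree initial ideal, combines this with generic reducedness, and invokes the standard ``generically reduced $+$ CM $+$ irreducible $\Rightarrow$ integral'' argument (Eisenbud~18.15). Your worry about circularity is unfounded---the proof of Theorem~\ref{thm:1} is a self-contained S-pair computation that nowhere uses primeness of $J_n$, so the forward reference is legitimate and your ``second route'' (inlining the Gr\"obner computation) is unnecessary.

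One small point: your justification of generic reducedness (``being generically the coordinate ring of the smooth variety $\Q^*$'') is a bit circular as written, since a priori $V(J_n)$ could carry nilpotents even over smooth points of $\Q^*$; the paper instead does a direct Jacobian rank check on the generators of $J_n$. That said, your alternative observation---that a squarefree initial ideal forces $J_n$ itself to be radical---is a perfectly good (and arguably cleaner) substitute, and would let you skip the Jacobian computation and the appeal to Eisenbud~18.15 entirely.
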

\begin{proof}
We first show that $J_n$ cuts out $\Q^*$ set-theoretically. Let $p$ be a point in $\GR$. The $2$-plane $P$ in $\KK^n$ corresponding to $p$ is simply the rowspan of the matrix \eqref{eqn:matrix}, after substituting in the Pl\"ucker coordinates for $p$. On the other hand, the fiber of $\Q^*$ over $p$ consists of those linear functionals $\KK^n\to \KK$ which vanish on $P$. But if $e_1,\ldots,e_n$ is the standard basis of $\KK^n$, requiring that a functional $\sum y_i e_i^*$ vanish on $P$ is the same as requiring that $f_1,\ldots,f_n$ vanish at $p$. Hence, the ideal $J_n$ cuts out $\Q^*$ set-theoretically. 

That $J_n$ is the (scheme-theoretic) ideal of $\Q^*$ follows from the fact that $J_n$ is a prime ideal. To prove this, consider the affine scheme $V(J_n)$ in $\Aff^{n\choose 2}\times\Aff^n$. A straightforward calculation with the Jacobian of $J_n$ shows that the singular locus of $V(J_n)$ is a proper subset of $V(J_n)$, hence $V(J_n)$ is generically reduced. Furthermore, we shall see in Theorem \ref{thm:1} that an initial ideal of $J_n$ is the Stanley-Reisner ideal of $\K_n*\Delta_{2n-4}$. Since $\K_n$ is a topological sphere and hence shellable, it follows that $S_n/J_n$ is Cohen-Macaulay, see e.g. \cite[Corollary 8.31 and Theorem 13.45]{miller:05a}.
But since the scheme $V(J_n)$ is generically reduced and irreducible, it must be integral, see e.g. \cite[Theorem 18.15]{eisenbud:95a}.
Hence, $J_n$ is prime.
\end{proof}

\begin{rem}
It also seems natural to consider the ideal of the tautological subbundle $\R$ in  $\PP^{ {n\choose 2}-1}\times\Aff^n$, and connect it to some simplicial complex with desirable properties. However, an easy calculation shows that the ideal of $\R$ is the ideal $I_{2,n+1}$, and hence, its initial ideal under a circular term order is just the Stanley-Reisner ideal of $\A_{n+1}*\Delta_n$. We leave the details to the reader.
\end{rem} 

\subsection{Stanley-Reisner Schemes}
This section is only necessary for those readers interested in the geometric applications of our results, found in \S \ref{sec:geom}.
Given a simplicial complex $\K$, we let $\PP(\K)$ be the vanishing locus of $I_\K$ in the appropriate projective space. In other words, we have
\[\PP(\K):=\Proj A_\K.\] We call $\PP(\K)$ the \emph{Stanley-Reisner scheme} of $\K$.
Note that the
 scheme $\PP(\K)$ ``looks'' like the complex $\K$: each face $f\in\K$ corresponds to some $\PP^{\dim f}\subset \PP(\K)$ and the intersection relations among these projective spaces are identical to those of the faces of $\K$. In particular, maximal faces of $\K$ correspond to the irreducible components of $\PP(\K)$.

We will also be interested in subschemes of products of projective spaces. Suppose that $\K$ is a simplicial complex on $V$, and we have partitioned $V$ into two disjoint sets $V_1$ and $V_2$. We then consider variables $x_i$, $i\in V_1$ as coordinates on $\PP^{|V_1|-1}$ and $x_i$, $i\in V_2$ as coordinates on $\PP^{|V_2|-1}$. Given such a partition of $V$, let $\PP'(\K)$ denote the vanishing locus of the ideal $I_\K$ in $\PP^{|V_1|-1}\times \PP^{|V_2|-1}$. We can also think of $\PP'(\K)$ as a quotient: let $X$ be the vanishing locus of $\I_K$ in $\Aff^{|V|}$, and $Z$ the vanishing locus of $\{x_ix_j\}_{i\in V_1,j\in V_2}$. Then 
\[
\PP'(\K)= (X\setminus Z)//(\KK^*)^2
\] 
where $(\KK^*)^2$ acts on $x_i,i\in V_1$ by weight $(1,0)$ and on  $x_i,i\in V_2$ by weight $(0,1)$.

For natural numbers $d,e$, we denote by $\Delta_{d,e}$ the simplex $\Delta_{d+e-1}$ whose vertices have been partitioned into sets of respectively $d$ and $e$ vertices. The vertices of the simplicial complex $\K_n$ are also naturally partitioned according to whether the corresponding variable is of the form $x_{ij}$ or $y_i$. Given simplicial complexes $\K,\K'$, each of whose vertices are partitioned into two sets, the vertices of $\K*\K'$ come with a natural partition as well.

\begin{lemma}\label{lemma:unobstructed}
Let $n\geq 5$.
The Stanley-Reisner schemes
\begin{align*}
&\PP(K_n*\partial \Delta_{d_1-1}*\ldots * \partial\Delta _{d_k-1}*\Delta_{2n-4-d})\\
&\PP'(\K_n*\partial \Delta_{d_1,e_1}*\ldots * \partial\Delta _{d_k,e_k}*\Delta_{n-1-d,n-2-e})
\end{align*}
have unobstructed deformations as subschemes of projective space or products of projective space, respectively. Furthermore, for either of these schemes, the first and second cohomology groups of the structure sheaf vanish.
\end{lemma}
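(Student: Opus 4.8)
The plan is to reduce the whole statement to two facts about the Stanley-Reisner ring $A=A_{\mathcal{L}}$ of the complex $\mathcal{L}$ occurring in it, where we write $\mathcal{L}$ for either of the two joins $\K_n*\partial\Delta_{d_1-1}*\cdots*\partial\Delta_{d_k-1}*\Delta_{2n-4-d}$ or $\K_n*\partial\Delta_{d_1,e_1}*\cdots*\partial\Delta_{d_k,e_k}*\Delta_{n-1-d,n-2-e}$ — as abstract complexes these have the same shape, a join of $\K_n$ with several boundaries of simplices and one simplex. The two facts are: (i) $T^2(A)=0$; and (ii) $A$ is Cohen-Macaulay. Granting them, unobstructedness of $\PP(\mathcal{L})$ (resp.\ $\PP'(\mathcal{L})$) as a subscheme of $\PP^N$ (resp.\ of a product of two projective spaces) follows from the standard comparison between the embedded deformation theory of a (multi)projectively embedded Stanley-Reisner scheme and the graded (resp.\ multigraded) cotangent cohomology of its homogeneous coordinate ring, carried out as in \cite{ilten:14a}: the obstruction space for embedded deformations is controlled by $T^2(A)$ together with the cohomology of the restricted tangent sheaf of the ambient, both of which we will see vanish. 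Likewise, $H^1$ and $H^2$ of the structure sheaf are read off the (multi)graded local cohomology of $A$. (Geometrically, $\PP(\mathcal{L})$ is cut out of $\PP(\K_n*\Delta_{2n-4})$ by $k$ monomial hypersurfaces of degrees $d_1,\dots,d_k$, which accounts for the boundary-of-simplex factors; but it is cleanest to argue directly with $A$.)

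To prove (i): for simplicial complexes $\K,\mathcal{M}$ on disjoint vertex sets the minimal non-faces of $\K*\mathcal{M}$ are those of $\K$ together with those of $\mathcal{M}$, whence $A_{\K*\mathcal{M}}\cong A_{\K}\otimes_{\KK}A_{\mathcal{M}}$. Thus $A\cong A_{\K_n}\otimes_{\KK}\left(\bigotimes_i A_{\partial\Delta_{d_i-1}}\right)\otimes_{\KK}\KK[x_0,\dots,x_m]$ for a suitable $m$. Over the field $\KK$ there is no $\mathrm{Tor}$ obstruction, so the cotangent complex of a tensor product $R\otimes_{\KK}R'$ splits as the direct sum of the cotangent complexes of the two factors (base-changed up); dualizing and taking cohomology, and using flatness, yields $T^i(R\otimes_{\KK}R')\cong\left(T^i(R)\otimes_{\KK}R'\right)\oplus\left(R\otimes_{\KK}T^i(R')\right)$. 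Iterating, $T^2(A)$ is assembled from $T^2(A_{\K_n})$, the $T^2(A_{\partial\Delta_{d_i-1}})$, and $T^2(\KK[x_0,\dots,x_m])$. The first vanishes by Theorem \ref{thm:unobstructed}; each $A_{\partial\Delta_{d_i-1}}\cong\KK[x_1,\dots,x_{d_i}]/(x_1\cdots x_{d_i})$ is a hypersurface, hence a complete intersection, so its $T^2$ vanishes; and the polynomial ring is regular. Hence $T^2(A)=0$.

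To prove (ii) and extract the consequences: $A_{\K_n}$ is Cohen-Macaulay because $\K_n$ is a sphere and therefore shellable, the $A_{\partial\Delta_{d_i-1}}$ are hypersurfaces, and $\KK[x_0,\dots,x_m]$ is regular; and a tensor product over a field of Cohen-Macaulay rings is Cohen-Macaulay, of Krull dimension the sum of the Krull dimensions, so $A$ is Cohen-Macaulay. A short computation with the Krull dimension and the canonical module of $A$ then gives the vanishing of $H^1$ and $H^2$ of the structure sheaves, as well as that of $H^1$ and $H^2$ of the ambient's restricted tangent sheaf (via the Euler sequence of $\PP^N$, respectively the two Euler sequences on the product, together with the corresponding twists of $\CO_{\PP(\mathcal{L})}$, resp.\ $\CO_{\PP'(\mathcal{L})}$) called for in the first paragraph. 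The $\PP'$ case is entirely parallel, using the bigrading on $A$ induced by the partition of its vertex set.

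The one step I expect to require genuine care is the comparison invoked at the end of the first paragraph — passing from $T^2(A)=0$ and the ambient vanishings to unobstructedness of the \emph{embedded} Hilbert functor, uniformly for subschemes of $\PP^N$ and of a product of two projective spaces. The vanishing of $T^2(A)$ is essentially formal once Theorem \ref{thm:unobstructed} and the K\"unneth-type splitting are available, and the structure-sheaf vanishing is a routine consequence of Cohen-Macaulayness.
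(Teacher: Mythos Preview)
Your proof is correct and follows essentially the same approach as the paper. The paper's version is terser: it cites \cite[Proposition 2.3]{ilten:14a} directly for the fact that joins of unobstructed complexes are unobstructed (which is exactly the K\"unneth splitting you spell out), and it handles your ``genuine care'' step in one stroke by invoking \cite[Theorem 3.3]{kleppe:14a}, which packages the passage from $T^2(A)=0$ plus Cohen--Macaulayness to unobstructedness of the embedded Hilbert functor (uniformly for $\PP^N$ and products), so the Euler-sequence and restricted-tangent computations you anticipate are absorbed into that reference; for the cohomology vanishing it uses the standard identification $H^i(Y,\CO_Y)\cong H_I^{i+1}(S)$ with local cohomology at the irrelevant ideal and then Cohen--Macaulayness plus $\dim S>3$, which is your ``short computation'' made explicit.
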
 
\begin{proof}
By Theorem \ref{thm:unobstructed}, $\K_n$ is unobstructed, and $\partial\Delta_m$ is unobstructed since its ideal is principal. It follows from \cite[Proposition 2.3]{ilten:14a} that the join of unobstructed simplicial complexes is again unobstructed.
The claim regarding unobstructedness now follows from \cite[Theorem 3.3]{kleppe:14a} together with the fact that the Stanley-Reisner ring of the join of $\K_n$ with (boundaries of) simplices is shellable, hence Cohen-Macaulay \cite[Theorem 13.45]{miller:05a}.

The claim regarding the cohomology vanishing of the structure sheaf uses the standard isomorphism relating sheaf cohomology to local cohomology: if $Y$ is a subscheme of projective space or a product of projective spaces, then for $i\geq 1$
\[
H^i(Y,\CO_Y)\cong H_I^{i+1}(S)
\]
where $S$ is the (multi)homogeneous coordinate ring of $Y$ and $I$ the irrelevant ideal, see e.g~\cite[\S 9.5]{cox:11a}. As noted above, in our instance $S$ is Cohen-Macaulay and of dimension larger than three, so the claim follows.
\end{proof}

\section{A Gr\"obner Basis Computation}
We construct a term order $\prec$ on $S_n$ by successively refining partial orders as follows: 
\begin{enumerate}[(i)]
\item Order terms by total degree;
\item Terms containing some $y_i$ for $2\leq i \leq n-1$ are smaller than those without;
\item Terms containing $x_{(n-1)n}$ are smaller than those without;\label{item:2}
\item Order terms lexicographically by $x_i\prec y_{n-1} \prec y_{n-2} \prec \ldots \prec y_2 \prec y_1 \prec y_n$;
\item Order terms with any circular order  compatible with (\ref{item:2}) above.
\end{enumerate}

\begin{thm}\label{thm:1}
Under the term order $\prec$, the initial ideal of $J_n$ is the Stanley-Reisner ideal of $\K_n * \Delta_{2n-4}$.
\end{thm}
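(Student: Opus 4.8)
The plan is to show that the given generators of $J_n$ — the Pfaffians $\Phi_{ijkl}$ and the quadrics $f_1,\ldots,f_n$ — form a Gröbner basis for $J_n$ under $\prec$, and that the resulting initial ideal is precisely $I_{\K_n*\Delta_{2n-4}}$. First I would identify the lead terms. By step (v) of the term order construction, refining a circular order, the Pfaffian $\Phi_{ijkl}$ has lead term $x_{ik}x_{jl}$ as before; the lead monomials of these account for the $\binom{n}{4}$ quadrics $x_{ik}x_{jl}$ generating $I_{\A_n}$. For the quadric $f_i = \sum_j x_{ji} y_j$ (with the convention $x_{ji}=-x_{ij}$, $x_{ii}=0$), I would use steps (ii)–(iv) to pin down the lead term. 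Terms containing $y_j$ for $2\le j\le n-1$ are penalized by (ii); among the remaining candidates one must track $y_1$, $y_n$, and $x_{(n-1)n}$ carefully via (iii) and the lexicographic order (iv). The expectation — matching the extra generators of $I_{\K_n}$ listed in the Lemma — is that $f_1$ has lead term $x_{1(n-1)}y_1$; that $f_{n-1},f_n$ together with the special structure produce $x_{1n}y_1$, $x_{1n}y_n$, $x_{2n}y_n,\ldots$; and that $f_n$ itself contributes the linear-in-disguise monomial giving $x_{(n-2)n}$ (note step (iii) makes $x_{(n-1)n}$-terms small, so the lead term of $f_n = \sum_j x_{jn}y_j$ avoids $x_{(n-1)n}y_{n-1}$). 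I would carefully set $y_1 = x_{w_1}$, $y_n = x_v$, $x_{1n}=x_{w_2}$ as in the excerpt so that the lead terms literally are the Stanley–Reisner generators from the Lemma, i.e.\ the initial ideal \emph{contains} $I_{\K_n*\Delta_{2n-4}}$.

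The core of the argument is then a Buchberger-style verification that all S-polynomials among the $\binom{n}{4}+n$ generators reduce to zero. I would organize this into three families of S-pairs: (a) Pfaffian–Pfaffian pairs, which already reduce to zero under any circular term order by \cite[Proposition 3.7.4]{sturmfels:08a} — the Pfaffians are a Gröbner basis for $I_{2,n}$ — and one must check the refinement by (ii)–(iv) doesn't disturb this, which it doesn't since those steps only compare $y$-content and the Pfaffians are $y$-free; (b) Pfaffian–$f_i$ pairs; and (c) $f_i$–$f_j$ pairs. For family (c), the key identity is that the $f_i$ arise as the matrix equation $M\mathbf{y} = \mathbf{f}$ with $M$ skew-symmetric, so $\sum_i y_i f_i = \mathbf{y}^T M \mathbf{y} = 0$ identically, and more generally linear syzygies among the $f_i$ come from the Pfaffian relations on $M$; I would use these to rewrite each S-polynomial as a combination of the generators with smaller lead term. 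Family (b) is handled similarly, using that multiplying a row-expansion of a Pfaffian by the $y$-vector produces relations expressible through the $f_i$.

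The main obstacle I expect is family (b)/(c): verifying that the S-polynomials involving the $f_i$ reduce to zero is not automatic from the classical Grassmannian Gröbner basis, and the bookkeeping is delicate because the lead terms of the $f_i$ are selected by the somewhat ad hoc lexicographic tie-breaking in steps (ii)–(iv), so one cannot simply invoke symmetry. A clean way to push this through is to exhibit an explicit generating set of syzygies among $\{\Phi_{ijkl}\}\cup\{f_i\}$ — which the excerpt anticipates doing in \S\ref{sec:syz} — and check each syzygy is "homogeneous enough" with respect to $\prec$ that it certifies the corresponding S-polynomial reduction (the standard criterion that a generating set of syzygies whose lead terms behave well implies the Gröbner basis property). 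Once the Gröbner basis property is established, the initial ideal equals the monomial ideal generated by the lead terms computed in the first step, which by the Lemma (and the identifications of $y_1,y_n,x_{1n}$ with $w_1,v,w_2$) is exactly $I_{\K_n}$ extended by the cone over $\Delta_{2n-4}$ — the $2n-4$ simplex vertices being the variables not appearing in any lead term, namely $x_{1n}$(? — more precisely the $2n-3$ Plücker variables $x_{i(i+1)}$, $x_{1n}$ together with $y_2,\ldots,y_{n-1}$, totalling $2n-3$ vertices, i.e.\ $\Delta_{2n-4}$). A final consistency check on the vertex count closes the proof.
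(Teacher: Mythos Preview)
Your approach is essentially the paper's: verify Buchberger's criterion for the generating set $G=\{\Phi_{ijkl}\}\cup\{f_i\}$, handling Pfaffian--Pfaffian S-pairs via the known circular Gr\"obner basis and reducing the remaining S-pairs using explicit syzygies. The paper organizes the non-Pfaffian cases into four explicit families (using that only pairs with non-coprime lead terms need checking) and writes down the reduction for each, exactly the program you outline; the syzygies $R_{ijk}$ and $\sum y_i f_i=0$ you anticipate are precisely what appear.

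However, your lead-term bookkeeping has genuine errors that would derail the computation if not fixed. Since $f_1=\sum_{j\geq 2} x_{1j}y_j$ has no $y_1$ term whatsoever, its lead term cannot be $x_{1(n-1)}y_1$. Under $\prec$ one gets
\[
LT(f_i)=x_{in}y_n\quad(1\leq i\leq n-2),\qquad LT(f_{n-1})=x_{1(n-1)}y_1,\qquad LT(f_n)=x_{1n}y_1,
\]
the point being that for $f_{n-1}$ the competing term $x_{(n-1)n}y_n$ is killed by step (iii), while for $1\leq i\leq n-2$ step (iv) selects $x_{in}y_n$ over $x_{1i}y_1$. These lead terms match the Lemma's list of generators of $I_{\K_n}$ exactly. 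Getting this right matters, because it determines \emph{which} S-pairs have non-coprime lead terms and hence which cases you actually need to check; with the wrong lead terms you would be analyzing the wrong S-polynomials. Your vertex count for $\Delta_{2n-4}$ is also slightly off: the $2n-3$ free variables are $x_{12},x_{23},\ldots,x_{(n-1)n}$ (the $n-1$ sides other than $\{1,n\}$) together with $y_2,\ldots,y_{n-1}$; the variable $x_{1n}$ is \emph{not} free, since under the identification $x_{1n}=x_{w_2}$ it is a vertex of $\K_n$.
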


Before proving the theorem, we state two immediate corollaries:

\begin{cor}
Under the standard grading on $S_n$, the degree of $J_n$ is 
\[
\frac{2}{n-1} {2(n-2)\choose n-2}+\frac{1}{n-2} {2(n-3)\choose n-3}.
\]

\end{cor}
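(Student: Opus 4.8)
The plan is to convert the degree computation into a facet count. Since an initial ideal has the same Hilbert function as the original ideal, and the term order $\prec$ refines the standard grading (step (i)), Theorem~\ref{thm:1} identifies the degree of $J_n$ with the degree of the Stanley-Reisner ring $A_{\K_n*\Delta_{2n-4}}$. For a pure simplicial complex $\K$ of dimension $d$, the degree of $A_\K$ equals the number of facets of $\K$, as one reads off directly from the numerator of the Hilbert series $\sum_i f_{i-1}(\K)\,t^i/(1-t)^i$ (the term $i=d+1$ contributes $f_d$ at $t=1$, all others vanish). The complex $\K_n$ is a topological sphere, hence pure, and forming the join with the simplex $\Delta_{2n-4}$ preserves purity while putting the facets of $\K_n*\Delta_{2n-4}$ in bijection with those of $\K_n$. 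So it suffices to count the facets of $\K_n$.

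First I would count facets of $\A_n*S^0$. The facets of $\A_n$ are exactly the triangulations of the $n$-gon, and there are $\frac{1}{n-1}\binom{2(n-2)}{n-2}$ of these (the Catalan number $C_{n-2}$); since $S^0$ has precisely the two facets $\{w_1\}$ and $\{w_2\}$, the complex $\A_n*S^0$ has $2C_{n-2}$ facets.

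Next I would track what the stellar subdivision does. Recall $\K_n=\st(f,\A_n*S^0)$ with $f$ the edge on $\delta_{1(n-1)}$ and $w_1$. Stellar subdivision at an edge leaves untouched every facet not containing $f$, and replaces a facet $g\supseteq f$ by the two facets $(g\setminus\{\delta_{1(n-1)}\})\cup\{v\}$ and $(g\setminus\{w_1\})\cup\{v\}$; consequently the number of facets of $\K_n$ equals the number of facets of $\A_n*S^0$ plus the number of those facets that contain $f$. A facet of $\A_n*S^0$ contains $f$ exactly when its $S^0$-part is $\{w_1\}$ and its $\A_n$-part is a triangulation using the diagonal $\delta_{1(n-1)}$. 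That diagonal splits the $n$-gon into an $(n-1)$-gon (on vertices $1,\dots,n-1$) and a triangle (on vertices $n-1,n,1$), so the number of such triangulations is $C_{n-3}=\frac{1}{n-2}\binom{2(n-3)}{n-3}$. Hence $\K_n$ has $2C_{n-2}+C_{n-3}$ facets, which is exactly the asserted formula.

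The only point requiring care is the stellar-subdivision bookkeeping --- confirming that each facet containing $f$ yields exactly two facets of $\K_n$, with nothing double-counted and nothing spurious created --- together with the elementary fact that triangulations of the $n$-gon through a fixed ``short'' diagonal are counted by $C_{n-3}$. Everything else is a routine invocation of standard properties of Stanley-Reisner rings and of the invariance of the Hilbert function under Gr\"obner degeneration.
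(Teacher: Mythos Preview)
Your proposal is correct and follows essentially the same route as the paper: reduce to counting facets of $\K_n$ via Theorem~\ref{thm:1}, count facets of $\A_n*S^0$ as twice the Catalan number $C_{n-2}$, and then add the $C_{n-3}$ new facets coming from stellar subdivision at $f$ by identifying facets through $f$ with triangulations of the $(n-1)$-gon. Your write-up is slightly more explicit about why the degree equals the facet count and about the stellar-subdivision bookkeeping, but the argument is otherwise identical.
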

\begin{proof}
The degree of $J_n$ is the same is the degree of the initial ideal of $J_n$, which by Theorem \ref{thm:1} is equal to the number of top-dimensional faces of $\K_n$. Now, the number of top-dimensional faces of $\A_n$ is just the number of triangulations of the $n$-gon, which is equal to  $\frac{1}{n-1} {2(n-2)\choose n-2}$, see e.g.~\cite{lee:89a}. Taking the join of $\A_n$ with $S^0$ doubles the number. Stellar subdivision in the edge $f$ (with vertices $\delta_{1(n-1)}$ and $w_1$) increases the number of top-dimensional faces by the number of top-dimensional faces of $\A_n*S^0$ containing the edge $f$. But this is just the number of top-dimensional faces of $\A_n$ containing $\delta_{1(n-1)}$, that is, the number of triangulations of the $n$-gon containing the diagonal $\delta_{1(n-1)}$. The set of such triangulations is in bijection to triangulations of the $(n-1)$-gon, of which there are 
$\frac{1}{n-2} {2(n-3)\choose n-3}$
.
\end{proof}

\begin{cor}\label{cor:cm}
The ring $S_n/J_n$ is Cohen-Macaulay.
\end{cor}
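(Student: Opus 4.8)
The plan is to read the corollary off Theorem~\ref{thm:1} by a standard Gr\"obner-degeneration argument, exactly the one already sketched in the proof that $J_n$ is prime. For any term order, $J_n$ and $\operatorname{in}_\prec(J_n)$ fit into a flat family over $\Aff^1$ whose general fiber is $S_n/J_n$ and whose special fiber is $S_n/\operatorname{in}_\prec(J_n)$; hence $\dim S_n/J_n=\dim S_n/\operatorname{in}_\prec(J_n)$, and since depth is lower semicontinuous along such a family one gets $\depth S_n/J_n\geq\depth S_n/\operatorname{in}_\prec(J_n)$ (see e.g.~\cite[Corollary~8.31]{miller:05a}). So it suffices to show that $S_n/\operatorname{in}_\prec(J_n)$ is Cohen--Macaulay.

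For that step, I would first invoke Theorem~\ref{thm:1}: $\operatorname{in}_\prec(J_n)$ is the Stanley--Reisner ideal of $\K_n*\Delta_{2n-4}$, and since the non-faces of $\K_n*\Delta_{2n-4}$ are exactly those of $\K_n$, the ring $A_{\K_n*\Delta_{2n-4}}$ is a polynomial ring over $A_{\K_n}$ in $2n-3$ variables. Thus it is Cohen--Macaulay iff $A_{\K_n}$ is. Now $\K_n$ is a triangulated sphere (as recorded in \S\ref{sec:assoc}), so Reisner's criterion applies directly: the link of any face is again a sphere and therefore has reduced homology concentrated in top degree, whence $A_{\K_n}$ is Cohen--Macaulay. (Alternatively, $\K_n$ is a polytopal sphere, being a single stellar subdivision of the boundary complex of a bipyramid over the polar dual of the associahedron, hence shellable; coning with $\Delta_{2n-4}$ preserves shellability, and \cite[Theorem~13.45]{miller:05a} then gives the Cohen--Macaulay property.)

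Putting the pieces together yields $\depth S_n/J_n\geq \dim S_n/\operatorname{in}_\prec(J_n)=\dim S_n/J_n\geq \depth S_n/J_n$, forcing equality, so $S_n/J_n$ is Cohen--Macaulay. The argument is entirely formal once Theorem~\ref{thm:1} is in hand; there is no genuine obstacle, the only nontrivial ingredient being that already-cited theorem together with the fact that $\K_n$ is a sphere.
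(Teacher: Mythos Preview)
Your argument is correct and follows essentially the same route as the paper's proof: invoke Theorem~\ref{thm:1}, then use that the Stanley--Reisner ring of the sphere $\K_n$ (joined with a simplex) is Cohen--Macaulay, and transfer this to $S_n/J_n$ via the Gr\"obner degeneration (\cite[Corollary~8.31 and Theorem~13.45]{miller:05a}). The only cosmetic difference is that the paper appeals directly to shellability of $\K_n$, whereas you lead with Reisner's criterion and mention shellability as an alternative; either route works and the content is the same.
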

\begin{proof}
Since $\K_n$ is a topological sphere and hence shellable, the claim follows from Theorem \ref{thm:1} together with \cite[Corollary 8.31 and Theorem 13.45]{miller:05a}.
\end{proof}

Before we begin the proof of Theorem \ref{thm:1}, we recall several standard notions. Fix a term order $\prec$. For polynomials $f$ and $g$ we define their \emph{S-polynomial} as
\begin{align*}  
S(f,g) =  \frac{lcm\big( LT_\prec(f), LT_\prec(g) \big)}{LT_\prec(f)}f -  \frac{lcm\big( LT_\prec(f), LT_\prec(g) \big)}{LT_\prec(f)}g .
\end{align*}
Here, $LT_\prec(h)$ denotes the leading term of a polynomial $h$. Observe that the leading terms of $f$ and $g$ cancel in $S(f,g)$ by construction. Another useful notion is a generalization of multivariate polynomial division by a set. Let $G = \{ g_i \}$ be a set of polynomials. We say that a polynomial $f$ \emph{reduces to zero modulo} $G$ if we can write
\begin{align*}
f = \sum_{g_i \in G} a_i g_i
\end{align*}
for polynomials $a_i$ that satisfy $ LT_\prec(a_ig_i)  \preceq LT_\prec(f)$. For more details see \S2.9 in \cite{cox:92a}.

\begin{proof}[Proof of Theorem \ref{thm:1}]
Let $G$ denote the set of generators of $J_n$ as described in \S \ref{sec:Q}. Under the term order $\prec$, the initial terms of $G$ are exactly the generators of the Stanley-Reisner ideal of $\K_n * \Delta_{2n-4}$. Hence, showing that the initial ideal of $J_n$ is equal to the Stanley-Reisner ideal of $\K_n * \Delta_{2n-4}$ is equivalent to proving that $G$ is a Gr\"obner basis for $J_n$ with respect to $\prec$. 

By \cite[Theorem 2.9.3]{cox:92a}, proving that $G$ is a Gr\"obner basis is equivalent
 to showing that the S-polynomials of pairs of elements of $G$ reduce to zero modulo $G$. Since $\prec$ gives by construction a circular order on the monomials involving only the $x$-variables, and the Pfaffians $\Phi_{ijkl}$ form a Gr\"obner basis with respect to any circular term order, the S-polynomials  $S(\Phi_{ijkl},\Phi_{i'j'k'l'})$ all reduce to zero modulo $G$.

Hence, it remains to show that the S-polynomials $S(f_i, f_j)$ and $S(f_r, \Phi_{ijkl})$ reduce to zero modulo $G$. Furthermore, by \cite[Proposition 2.9.4]{cox:92a} we only need to consider S-polynomials of pairs whose leading terms are not coprime. This leaves us four cases to consider, which we deal with one-by-one below.

To simplify notation, for $1\leq i,j\leq n$, and $i\neq j$ we set $f_i(j)=f_i-x_{ij}y_j$, where $x_{ij}=-x_{ji}$ if $j<i$. 

\begin{case} $S(f_i, f_j)$ with  $1 \leq i<j \leq n-2$.
We have 
\begin{align*}
S(f_i,f_j) &= x_{j n}f_i(n) - x_{in}f_j(n).
\end{align*}
By a straightforward calculation, this is equal to 
\begin{align*}
 -\sum_{r=1}^{i-1} y_r \Phi_{  r i j n }  + \sum_{r=i+1}^{j-1} y_r \Phi_{ i r j n  } - \sum_{r=j+1}^{n-1} y_r \Phi_{  i j r n }   - x_{i j}f_n,
\end{align*}
whose leading term is the same as that of $-y_1\Phi_{1ijn}$. All of the other summands have smaller leading terms. Hence, this reduces to zero modulo $G$.
\end{case}

\begin{case}
 $S(f_1, f_n)$.
We have 
\begin{align*}
S(f_1, f_n)  = y_1 f_1(n) + y_n f_n(1).
\end{align*}
A straightforward calculation shows that
\begin{align*}
 S(f_1, f_n) = -\sum_{r=2}^{n-1}y_r f_r.
\end{align*}
The leading term of this is the same as that of $-y_2f_2$. All other summands have smaller leading terms, so again this reduces to zero modulo $G$.
\end{case}

\begin{case}
$S(f_j, \Phi_{i j k n})$ with  $2 \leq j\leq n-2$ and $i < j < k \leq n-1$.

We have
\begin{align*}
S(f_{j}, \Phi_{i j k n} ) = x_{i k}f_j(n) - y_n (x_{i j}x_{k n} + x_{i n}x_{j k}).
\end{align*}
A straightforward calculation shows that
\begin{align*}
S(f_{j}, \Phi_{i j k n} ) =& -\sum_{r=1}^{i-1} y_r \Phi_{ r i j k } + \sum_{r=i+1}^{j-1} y_r \Phi_{ i r j k  } \\
&\qquad- \sum_{r=j+1}^{k-1} y_r \Phi_{ i j r k }  + \sum_{r=k+1}^{n-1} y_r \Phi_{  i j k r }  - x_{i j}f_k - x_{j k}f_i . \end{align*}

The leading term of this expression is the leading term of either $-x_{ij}f_k$ or $-x_{jk}f_i$. As in the previous two cases all other summands have smaller leading terms, and we see that this S-polynomial must reduce to zero modulo $G$.
\end{case}
\begin{case}
$S(f_{n-1}, \Phi_{ 1  j  (n-1)  n })$ with $2 \leq j \leq n-2$.

We have
\begin{align*}
S(f_{n-1},  \Phi_{ 1  j (n-1)  n } )= y_1(x_{1 j}x_{(n-1) n} + x_{1 n}x_{j (n-1)}) -x_{j n} f_{n-1}(1).
\end{align*}
Again by a straightforward calculation,
\begin{align*}
S(f_{n-1},  \Phi_{ 1  j  (n-1)  n } ) =& -\sum_{r=2}^{j-1}y_r \Phi_{  r j (n-1) n }  \\
&\qquad+ \sum_{r=j+1}^{n-2}y_r\Phi_{  j r (n-1) n  }  -x_{(n-1) n} f_j - x_{j (n-1)}f_n.
\end{align*}
The leading term of this agrees with the leading term of $ -x_{j (n-1)} f_n $, and all other summands have smaller leading terms. Hence, this S-polynomial also reduces to zero modulo $G$.
\end{case}
\end{proof}

\section{Syzygies}\label{sec:syz}

In the following section, we will need to understand the syzygies of the ideal $J_n\subset S_n$. However, our Gr\"obner basis computation from the proof of Theorem \ref{thm:1} essentially hands us these syzygies:

\begin{thm}
The syzygies of the ideal $J_n$ are generated by the syzygies of $I_{2,n}$, the $n\choose 3$ syzygies
\begin{align*}
R_{i j k}:\qquad0&= x_{i j}f_k - x_{i k}f_j + x_{j k}f_i \\
&\qquad+ \sum_{r=1}^{i-1} y_r \Phi_{ r i j k } - \sum_{r=i+1}^{j-1} y_r \Phi_{ i r j k } + \sum_{r=j+1}^{k-1} y_r \Phi_{ i j r k } - \sum_{r=k+1}^{n} y_r \Phi_{  i j k r } 
\end{align*}
and the syzygy
\[0=\sum_{i=1}^n y_if_i.\]
\end{thm}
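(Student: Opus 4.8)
My plan is to read the syzygies off the Gröbner basis of Theorem \ref{thm:1} via Schreyer's theorem, having first checked directly that the listed relations are genuine syzygies. That check is routine. Writing $M$ for the skew-symmetric matrix of \eqref{eqn:matrix} and $y=(y_1,\dots,y_n)^{T}$, we have $(f_1,\dots,f_n)^{T}=My$, so $\sum_i y_if_i=y^{T}My=0$ by skew-symmetry. For $R_{ijk}$, expanding with the convention $x_{ab}=-x_{ba}$, $x_{aa}=0$ gives
\[
x_{ij}f_k-x_{ik}f_j+x_{jk}f_i=\sum_{r=1}^{n}y_r\,(x_{ij}x_{kr}-x_{ik}x_{jr}+x_{jk}x_{ir});
\]
the summands with $r\in\{i,j,k\}$ vanish identically, and for $r\notin\{i,j,k\}$ the bracket is $\pm\Phi$ on the sorted four-set $\{i,j,k,r\}$, with the sign depending only on where $r$ falls relative to $i,j,k$; collecting these reproduces the displayed formula for $R_{ijk}$. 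Finally, every syzygy of $I_{2,n}$ is a syzygy of $J_n$ (extend by zero on the $f_i$). So the submodule generated by the listed relations sits inside $\mathrm{Syz}(J_n)$, and the content of the theorem is the reverse inclusion.

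For the reverse inclusion, recall from Theorem \ref{thm:1} that $G=\{\Phi_{ijkl}\}\cup\{f_1,\dots,f_n\}$ is a Gröbner basis of $J_n$ with respect to $\prec$. By Schreyer's theorem (a standard property of Gröbner bases, see e.g.~\cite[Ch.~15]{eisenbud:95a}), $\mathrm{Syz}(J_n)$ is generated by the syzygies obtained from each S-polynomial $S(g,h)$, $g,h\in G$, together with a witness that it reduces to zero modulo $G$ -- that is, from exactly the reductions carried out in the proof of Theorem \ref{thm:1} -- and only pairs with non-coprime leading terms contribute. For a pair of Pfaffians the reduction uses Pfaffians only, so the resulting syzygy has zero coefficients on the $f_i$ and lies in $\mathrm{Syz}(I_{2,n})\subseteq\mathrm{Syz}(J_n)$. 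The remaining non-coprime pairs are $S(f_i,f_j)$ with $i<j\leq n-2$, $S(f_1,f_n)$, $S(f_{n-1},f_n)$, $S(f_j,\Phi_{ijkn})$ with $i<j<k<n$, and $S(f_{n-1},\Phi_{1j(n-1)n})$; the explicit reductions in the proof of Theorem \ref{thm:1}, together with the analogous reduction of $S(f_{n-1},f_n)$, exhibit the associated syzygies as scalar multiples of $R_{ijn}$, of $\sum_i y_if_i$, of $R_{1(n-1)n}$, of $R_{ijk}$, and of $R_{j(n-1)n}$, respectively. The triples so obtained -- namely $\{i,j,n\}$ with $j\leq n-2$, all $\{i,j,k\}$ with $k\leq n-1$, and all $\{i,n-1,n\}$ -- exhaust all $\binom{n}{3}$ of them, so every generator of $\mathrm{Syz}(J_n)$ produced by Schreyer's theorem lies in the submodule generated by the syzygies of $I_{2,n}$, the $R_{ijk}$, and $\sum_i y_if_i$. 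Combined with the first paragraph, this gives the asserted equality.

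The one delicate point is the bookkeeping in the last step: one must keep track of the index re-orderings and signs built into the definition of $R_{ijk}$ in order to identify each S-polynomial reduction with the correct multiple of some $R_{ijk}$, and one must confirm that the pairs listed above are precisely all the pairs of elements of $G$ with non-coprime leading terms, so that no Schreyer generator is missed. Here one should note that $S(f_{n-1},f_n)$ is not among the four cases treated in the proof of Theorem \ref{thm:1}; it reduces to $\sum_{j=2}^{n-2}y_j\Phi_{1j(n-1)n}-x_{(n-1)n}f_1$, which yields the syzygy $R_{1(n-1)n}$ and leaves the statement of Theorem \ref{thm:1} intact.
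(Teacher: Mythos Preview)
Your argument is correct and follows the same route as the paper: Schreyer's theorem applied to the Gr\"obner basis of Theorem \ref{thm:1}, identifying each S-pair reduction with one of the listed syzygies. You are in fact more careful than the paper here: the pair $(f_{n-1},f_n)$ does have non-coprime leading terms (both divisible by $y_1$) but is omitted from the four-case analysis in the proof of Theorem \ref{thm:1}; your treatment of $S(f_{n-1},f_n)$, yielding $R_{1(n-1)n}$, closes that small gap and completes the enumeration of the $\binom{n}{3}$ relations $R_{ijk}$.
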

\begin{proof}
Observe that these syzygies arise in the proof of Theorem \ref{thm:1} when we showed that the S-polynomials reduced to zero modulo $G$. On the other hand, these syzygies (along with the syzygies of $I_{2,n}$) are enough to generate all syzygies of $J_n$. Indeed, given an ideal $I$ with Gr\"obner basis $G$, by  \cite[Proposition 2.9.8]{cox:92a} the syzygies of $I$ are generated by the expressions showing that S-polynomials of pairs of elements of $G$ reduce to zero modulo $G$. Furthermore, it suffices to consider only those pairs of elements with relatively prime leading terms, see e.g.~\cite[Theorem 15.10]{eisenbud:95a}.
\end{proof}

\begin{rem}
We will also need to understand some of the syzygies among the generators of $I_{2,n}$.
Choose indices $1\leq i<j<k<l<m\leq n$, and let $M$ be the $5\times 5$ skew-symmetric matrix with entries $x_{ij}, x_{ik}, \ldots$ etc. Set  
\begin{align*}
\mathbf{v} = (\Phi_{j k l m}, -\Phi_{i k l m}, \Phi_{i j l m}, -\Phi_{i j k m}, \Phi_{i j k m})^T .
\end{align*}
It is easily verified that $M\mathbf{v} = \mathbf{0}$, so that the entries of $M\mathbf{v}$ are syzygies among the generators of $I_{2,n}$. More explicitly, this gives the $5\cdot {n \choose 5}$ syzygies
\begin{align*}
R_{ijklm}^r\colon\qquad 0=x_{ri}\Phi_{j k l m} -x_{rj}\Phi_{i k l m}+x_{rk} \Phi_{i j l m}, -x_{rl}\Phi_{i j k m}+x_{rm}\Phi_{i j k m}
\end{align*}  
Here, $r\in \{i,j,k,l,m\}$ and we have the convention that $x_{ab}=-x_{ba}$ and $x_{aa}=0$.
For $n>5$, these syzygies do not generate the syzygy module of $I_{2,n}$, but in fact as we shall see in the following section, they suffice to show that the coordinate ring of $\GR$ is rigid.
\end{rem}

\section{Rigidity Results}
Let $S$ be any smooth finitely generated $\KK$-algebra, and $I\subset S$ any ideal. The \emph{first cotangent cohomology} $T^1(S/I)$ of $S/I$ can be defined as the cokernel of the map
\[
\Der_\KK(S,S) \to \Hom_S(I,S/I),
\]
where $\Der_\KK(S,S)$ is the $S$-module of $\KK$-linear derivations from $S$ to itself, and an element $\partial\in\Der_\KK(S,S)$ is mapped to the homomorphism sending $f\in I$ to $\partial(f)+I$.
The vector space $T^1(S/I)$ controls the first-order deformations of $S/I$. In particular, if it vanishes, we say $S/I$ is \emph{rigid}, in which case it has no infinitesimal deformations. See e.g. \cite{hartshorne:10a} for an introduction to deformation theory.

Our second main result is the following:
\begin{thm}\label{thm:2}
For $n\geq 6$, $S_n/J_n$ is rigid.
\end{thm}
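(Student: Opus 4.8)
The plan is to compute $T^1(S_n/J_n)$ directly from its definition as the cokernel of $\Der_\KK(S_n,S_n)\to\Hom_{S_n}(J_n,S_n/J_n)$, using the explicit generators $\Phi_{ijkl},f_1,\ldots,f_n$ of $J_n$ together with the syzygies $R_{ijk}$, $\sum y_if_i$, the Plücker syzygies, and the $R^r_{ijklm}$ recorded in \S\ref{sec:syz}. Since $T^1$ is graded, I would work one multidegree at a time (using the $\ZZ^{n+1}$-grading coming from the torus action, or at minimum the standard $\ZZ$-grading refined as needed), which cuts the computation down to finitely many finite-dimensional linear-algebra problems. The key structural input is Theorem~\ref{thm:1}: because $J_n$ has a squarefree monomial initial ideal $I_{\K_n*\Delta_{2n-4}}$, the graded pieces of $S_n/J_n$, of $\Hom_{S_n}(J_n,S_n/J_n)$, and of the relevant syzygy modules have Hilbert functions bounded by those of the Stanley-Reisner model, and one can pin down exactly which monomials survive. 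In particular the Cohen-Macaulayness from Corollary~\ref{cor:cm} and the sphericity of $\K_n$ give good control on the module $S_n/J_n$ in low degrees.

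The first concrete step is to reduce to the Grassmannian case. A homomorphism $\varphi\colon J_n\to S_n/J_n$ is determined by the images $\varphi(\Phi_{ijkl})$ and $\varphi(f_r)$, subject to the constraint that it kill every syzygy. Restricting $\varphi$ to the sub-ideal $I_{2,n}S_n\subset J_n$ generated by the Pfaffians and using the syzygies $R^r_{ijklm}$, I would first show — this is the promised algebraic reanalysis of Svanes's theorem — that modulo a derivation of $S_n$, one may assume $\varphi(\Phi_{ijkl})\in J_n$ for all $i<j<k<l$; here the argument mirrors the classical proof that $\GR$ is rigid for $n\ge5$, exploiting that the Pfaffian relations are "rich enough" in the sense that the $R^r_{ijklm}$ already force any $T^1$-class to be trivial on the Plücker part. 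Once $\varphi$ vanishes on the Pfaffians, the syzygies $R_{ijk}$ read $x_{ij}\varphi(f_k)-x_{ik}\varphi(f_j)+x_{jk}\varphi(f_i)\equiv 0\pmod{J_n}$, and the syzygy $\sum y_if_i$ gives $\sum y_i\varphi(f_i)\equiv 0$. So everything reduces to analyzing the tuple $(\varphi(f_1),\ldots,\varphi(f_n))$ of elements of $S_n/J_n$ satisfying these linear relations, modulo the subtuples coming from derivations (i.e. from $\partial\mapsto(\partial(f_1),\ldots,\partial(f_n))$, which includes in particular the "scaling" derivations $y_j\partial_{y_j}$ and the Grassmann-side derivations acting on the matrix).

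The heart of the argument is then to solve the relation $x_{ij}g_k-x_{ik}g_j+x_{jk}g_i=0$ in $S_n/J_n$, where $g_r:=\varphi(f_r)$. This is a Koszul-type relation on the regular sequence-like tuple $(x_{12},x_{13},\ldots)$ inside the Cohen-Macaulay ring $S_n/J_n$; using Theorem~\ref{thm:1} and the shellability of $\K_n*\Delta_{2n-4}$ I would argue that the only solutions in the appropriate (low) degrees are the "expected" ones, namely $g_i = \sum_j c_{ij} x_{ij}$-type combinations plus multiples of $y$'s, which are exactly the images of derivations. More precisely, I expect the relevant graded piece of the first syzygy module of $(x_{ij})$ over $S_n/J_n$ to be generated by the Plücker-type Koszul relations, so that any solution $(g_i)$ is a combination of those plus $f_i$-multiples, and one checks by hand that each such combination is realized by a derivation of $S_n$ — the scaling derivations $x_{ij}\partial_{x_{kl}}$ and $y_i\partial_{y_j}$ giving the matrix-multiplication/column-operation directions, which are exactly the infinitesimal automorphisms of the pair $(\GR,\Q^*)$. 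Finally one separately checks that there is no nonzero class supported in the degree where the additional generators $x_{1n}y_1$, $x_{(n-1)n}$, etc. of $I_{\K_n}$ live; this is where $n>5$ is used, since for $n=5$ the deformation to $SO(5,10)$ produces exactly the obstruction to rigidity.

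The main obstacle I anticipate is the middle step: controlling $\Hom_{S_n}(J_n,S_n/J_n)$ in the critical degrees precisely enough to see that every candidate homomorphism is a derivation. The bookkeeping is heavy because $J_n$ has two quite different families of generators, and the syzygies mix them; the saving grace is that the initial ideal is a Stanley-Reisner ideal of an explicitly understood (flag, shellable, spherical) complex, so all the ambient Hilbert-function bounds are computable, and the problem genuinely collapses to a finite check in a handful of multidegrees. I would organize the write-up by: (1) the degree-reduction/grading setup; (2) the Plücker part, yielding Svanes's rigidity as a by-product; (3) solving the linear relations on $(\varphi(f_i))$; (4) the finitely many leftover multidegrees, including the one that distinguishes $n=5$ from $n>5$.
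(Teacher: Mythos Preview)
Your overall strategy matches the paper's: first use derivations $\partial/\partial x_{ij}$ to normalize $\rho$ on the Pfaffians and then the syzygies $R^r_{ijklm}$ to force $\Psi_{ijkl}:=\rho(\Phi_{ijkl})=0$, after which the syzygies $R_{ijk}$ reduce to the Koszul-type relations $x_{ij}g_k-x_{ik}g_j+x_{jk}g_i\in J_n$ on $g_i:=\rho(f_i)$, and a second round of derivations $\partial/\partial y_i$ finishes the job. The $g_i$ step goes exactly as you outline and is short.

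The execution of the Pfaffian step, however, is rather different from what you sketch. You propose a multidegree/Hilbert-function bookkeeping argument, but the paper never works degree by degree; instead it proves a small combinatorial lemma about normal forms relative to the Gr\"obner basis of Theorem~\ref{thm:1}: if a monomial factors as $z_C\cdot z_D$ with all indices appearing in $z_C$ lying in a fixed interval $[\alpha,\beta]$ and $z_D$ already standard, then every monomial in the normal form of $z$ has the same shape $z_C'\cdot z_D$. The paper then normalizes $\rho$ so that $\Psi_{1234}$ contains no $x_{ij}$ with $i,j\le4$, $\Psi_{123l}$ no $x_{12},x_{13},x_{23}$, and $\Psi_{12kl}$ no $x_{12}$, and feeds the syzygy $R^2_{12345}$ through the lemma: in $x_{12}\Psi_{2345}-x_{23}\Psi_{1245}+x_{24}\Psi_{1235}-x_{25}\Psi_{1234}\in J_n$, only the first summand can contribute a monomial divisible by $x_{12}$ after reduction to normal form, so $\Psi_{2345}=0$. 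Iterating with $R^3_{12345},R^4_{12345},R^2_{1234l},\ldots$ and a three-out-of-five observation kills all $\Psi_{ijkl}$.

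The role of $n>5$ is also sharper than in your outline. It enters in exactly one place, the analysis of $x_{25}\Psi_{1234}$ in the very first syzygy above: one must check that passing from the $I_{2,n}$-normal form to the $J_n$-normal form cannot introduce an $x_{12}$, i.e.\ that no monomial of $x_{25}\Psi_{1234}$ is divisible by a leading term $x_{in}y_n$ or $x_{1(n-1)}y_1$. For $n=5$ one has $x_{25}=x_{2n}$ and a possible factor $y_5=y_n$ in $\Psi_{1234}$, which breaks the argument (and indeed produces the one-dimensional $T^1$ deforming to $SO(5,10)$); for $n\ge6$ this cannot happen. Your identification of the exceptional degree with ``where the extra generators of $I_{\K_n}$ live'' is morally right but would need to be made this concrete to close the argument.
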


Our strategy for proving this rigidity result will be to first reduce any $\rho\in \Hom_{S_n}(J_n,S_n/J_n)$ by certain derivations. We will then use well-chosen syzygies of $J_n$ to show that $\rho$ must vanish on all generators of $J_n$. This argument will rely on our description of the standard monomials in $S_n/J_n$ coming from Theorem \ref{thm:1}. Recall that a monomial of a polynomial ring $S$ is a \emph{standard monomial} for an ideal $I$ if it is not in the initial ideal of $I$; the images of the standard monomials form a vector space basis for $S/I$. A polynomial $f\in S$ is in \emph{normal form} if it is a linear combination of standard monomials. The normal form of an element $g\in S/I$ is the unique representative $f \in S$ of $g$ which is in normal form.

\begin{lemma}\label{lemma:normal}
Fix $1\leq \alpha \leq \beta \leq n$, and set
\begin{align*}
C&=\{m\in\NN\ |\ \alpha\leq m \leq \beta\}\\
D&=\{m\in \NN\ |\  m\leq \alpha\ \mathrm{or}\ m\geq \beta\}.
\end{align*}
Now, consider any monomial $z\in \KK[x_{ij}\ |\ 1\leq i<j\leq n]$ which can be written as a product of monomials $z=z_C\cdot z_D$ satisfying:
\begin{enumerate}
\item If $x_{ij}|z_C$, then $i,j\in C$; 
\item If $x_{ij}|z_D$, then $i,j\in D$; 
\item $z_D$ is in normal form with respect to $I_{2,n}$ and the circular term order. 
\end{enumerate}
Then each monomial appearing in the normal form of $z$ is of the form $z_C'\cdot z_D$, where $z_C'$ is a product of some variables $x_{ij}$ with $i,j\in C$.
\end{lemma}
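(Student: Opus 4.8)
The plan is to reduce the claim to a statement about how the Gröbner basis $G$ from Theorem~\ref{thm:1} behaves under division, restricted to the relevant variables. The circular term order used on the $x$-variables has the property that the lead term of $\Phi_{ijkl}$ is $x_{ik}x_{jl}$, corresponding to a crossing pair of diagonals $\delta_{ik},\delta_{jl}$. The key combinatorial fact is this: if a monomial $z$ factors as $z_C\cdot z_D$ as in the hypotheses, then \emph{the only Pfaffians $\Phi_{ijkl}$ whose lead terms can divide a monomial appearing during the reduction of $z$ are those with all of $i,j,k,l\in C$, or all of $i,j,k,l\in D$.} Indeed, a lead term $x_{ik}x_{jl}$ with $i<j<k<l$ dividing a monomial $m$ requires both $x_{ik}$ and $x_{jl}$ to divide $m$; if all four indices lie in $D$ this cannot happen by hypothesis~(3) since $z_D$ is already in normal form (and, crucially, reduction on the $D$-part never occurs — see below); and if the four indices straddle both $C\setminus\{\alpha,\beta\}$ and $D\setminus\{\alpha,\beta\}$, then because $C\cap D=\{\alpha,\beta\}$, the diagonals $\delta_{ik}$ and $\delta_{jl}$ cannot cross. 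More precisely, if $i<j<k<l$ and $\delta_{ik}$ crosses $\delta_{jl}$, then $i<j<k<l$ with $j,k$ separating $i,l$ cyclically; one checks that any such quadruple meeting both $\{m:\alpha<m<\beta\}$ and $\{m:m<\alpha \text{ or }m>\beta\}$ forces one of $\delta_{ik},\delta_{jl}$ to have both endpoints in $C$ (hence the quadruple is ``split'' in a way incompatible with the factorization) — so the $x_{ij}|z_D\Rightarrow i,j\in D$ and $x_{ij}|z_C\Rightarrow i,j\in C$ conditions are preserved.

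The argument then proceeds by induction on the term order: run the division algorithm reducing $z$ modulo $G$. At each step we have a monomial $z'$ in the expansion; I claim by induction that $z'$ has the form $z'_C\cdot z_D$ with $x_{ij}|z'_C\Rightarrow i,j\in C$. If $z'$ is standard we are done; otherwise some lead term of $G$ divides $z'$. By the observation above, if this lead term is $x_{ik}x_{jl}$ from a Pfaffian, then $\{i,j,k,l\}\subseteq C$ (the $D$ case is impossible since $z_D$ is already normal and the $C$-part cannot supply indices outside $C$), and replacing $z'$ by $z' - (\text{monomial})\cdot\Phi_{ijkl}/LT$ produces monomials whose non-$z_D$ part is still built from variables $x_{ij}$ with $i,j\in C$, because the tail terms $x_{ij}x_{kl}$ and $x_{il}x_{jk}$ of $\Phi_{ijkl}$ again have all indices in $C$. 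Since the only elements of $G$ involving $x$-variables only are the Pfaffians (the $f_r$ involve a $y$-variable, and $z$ has no $y$-variables, so $f_r$ lead terms $x_{ik}y_j$ or $x_{(n-1)n}y_n$ never divide $z'$), we never leave the subring $\KK[x_{ij}:i,j\in C]\cdot z_D$. Termination of the division algorithm then gives the normal form as a $\KK$-linear combination of standard monomials each of the form $z'_C\cdot z_D$.

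The main obstacle is verifying the crossing-diagonal claim carefully, i.e.\ that no Pfaffian lead term $x_{ik}x_{jl}$ with $i<j<k<l$ can divide a monomial of the form $z'_C\cdot z_D$ unless all four indices lie in $C$. This is where the precise shape of $C=\{\alpha,\dots,\beta\}$ and $D=\{1,\dots,\alpha\}\cup\{\beta,\dots,n\}$ matters: $D$ is itself an ``interval'' in the cyclic order on $\{1,\dots,n\}$ (wrapping through $n$ and $1$), so $\A_n$ restricted to diagonals with endpoints in $D$ is again an associahedral complex, and the only way $\delta_{ik},\delta_{jl}$ cross with, say, $\delta_{ik}$ having both ends in $C$ and $\delta_{jl}$ having both ends in $D$, is ruled out because $C$ and $D$ overlap only in $\{\alpha,\beta\}$ — the endpoints of $\delta_{ik}$ would have to lie strictly between those of $\delta_{jl}$ in cyclic order, which is impossible once $\delta_{jl}\subseteq D$ and $\delta_{ik}\subseteq C$. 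One should also check the degenerate cases $\alpha=\beta$ or $\{\alpha,\beta\}$ adjacent, where $C$ or $D$ degenerates; these are trivial or reduce to the main case. Finally, one must note that the hypothesis that $z_D$ is already in normal form is exactly what is needed so that no reduction step ever ``touches'' the $z_D$ factor — any Pfaffian that could reduce inside $z_D$ would contradict normality of $z_D$, so $z_D$ persists unchanged through the entire division.
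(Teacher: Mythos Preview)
Your proposal is correct and follows essentially the same approach as the paper: induct on the circular term order, observe that any Pfaffian lead term $x_{ik}x_{jl}$ dividing a monomial of the form $z_C'\cdot z_D$ must have all four indices in $C$ (the mixed case is impossible because $j$ or $k$ would be forced strictly between $\alpha$ and $\beta$ yet lie in $D$, and the pure-$D$ case contradicts normality of $z_D$), and then note the tail terms of that Pfaffian again involve only $C$-indexed variables. The paper compresses this into three sentences, simply asserting that if $z$ is not normal then some $x_{ik}x_{jl}$ with $\alpha\le i<j<k<l\le\beta$ divides $z_C$ and reducing by $\Phi_{ijkl}$ preserves the hypotheses; your geometric crossing-diagonal explanation is a correct (if slightly over-elaborated, and with one reversed phrasing about which endpoints lie between which) unpacking of that assertion.
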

\begin{proof}
Consider any monomial $z$ as above. If it is already in normal form, the claim is obvious. Otherwise, there must be $\alpha\leq i<j<k<l\leq \beta$ with $x_{ik}x_{jl}$ dividing $z_C$. Reducing $z_C$ by the Pfaffian $\Phi_{ijkl}$ leads to a binomial $u-v$, where $u\cdot z_D$ and $v\cdot z_D$ both satisfying the assumptions of the lemma. The claim now follows by induction with respect to the circular term order.
\end{proof}

\begin{proof}[Proof of Theorem \ref{thm:2}]
Fix some $\rho\in \Hom_{S_n}(J_n,S_n/J_n)$. For ease of notation, we set $\Psi_{ijkl}$ to be the normal form of $\rho(\Phi_{ijkl})$ in $S_n$. Likewise, we set $g_i$ to be the normal form of $\rho(f_i)$.

Now, by adding derivations of the form $f\cdot \frac{\partial}{\partial x_{ij}}$ for $1\leq i<j\leq 4$ to $\rho$, we can assume that no term of $\Psi_{1234}$ is divisible by $x_{ij}$ for $1\leq i<j\leq 4$. Likewise, by adding derivations of the form $f\cdot \frac{\partial}{\partial x_{ij}}$ for $i=1,2,3$ and $4<j\leq n$, we can assume that no term of $\Psi_{123j}$ is divisible by $x_{12},x_{13},x_{23}$. Finally, by adding derivations of the form $f\cdot \frac{\partial}{\partial x_{ij}}$ for $3<i<j\leq n$, we can assume that no term of $\Psi_{12ij}$ is divisible by $x_{12}$.

We will now show that for such a homomorphism $\rho$, we must have 
\begin{equation}\label{eqn:vanishing}
\Psi_{ijkl}=0\qquad \textrm{for all}\qquad 1\leq i < j < k < l\leq n.
\end{equation}
We begin by claiming that $\Psi_{2345}=0$. Considering the syzygy $R_{12345}^2$ we must have
\[
x_{12}\Psi_{2345}-x_{23}\Psi_{1245}+x_{24}\Psi_{1235}-x_{25}\Psi_{1234}\in J_n.
\]
Note that the first two terms in this sum are already in normal form with respect to $J_n$. Set $\alpha=2$ and let $\beta$ be the largest $j\leq n$ such that $x_{ij}$ divides some term of $\Psi_{1235}$ for some $i\geq 3$. By our assumptions on $\Psi_{1235}$, we can apply Lemma \ref{lemma:normal}
to conclude that each monomial in the normal form of $x_{24}\Psi_{1235}$ with respect to $I_{2,n}$ is not divisible by $x_{12}$. Furthermore, this normal form is equal to the normal form with respect to $J_n$, since the only way a term divisible by $x_{in}y_n$ $i=2,\ldots,n-2$,  $x_{1(n-1)}y_1$, or $x_{1(n-1)}y_1$ can appear is if some term in $x_{24}\Psi_{1235}$ is already divisible by such a monomial, which is impossible. Set again $\alpha=2$ and now let $\beta$ be the largest $j\leq n$ such that $x_{ij}$ divides some term of $\Psi_{1234}$ for some $i\geq 3$. An application of Lemma \ref{lemma:normal}
shows that each monomial in the normal form of $x_{25}\Psi_{1234}$ with respect to $I_{2,n}$ is not divisible by $x_{12}$. As before, no term in this normal form is divisible by $x_{in}y_n$ $i=2,\ldots,n-2$,  $x_{1(n-1)}y_1$, or $x_{1(n-1)}y_1$ unless some term in $x_{25}\Psi_{1234}$ is already divisible by such a quadric. But this is impossible unless $n=5$, and we have assumed otherwise. Rewriting each term in the above syzygy in its normal form with respect to $J_n$, we see that no monomial coming from the latter three terms is divisible by $x_{12}$. Hence, we must have $\Psi_{2345}=0$.

Next, we show that $\Psi_{1345}=0$. Indeed, the syzygy $R_{12345}^3$ implies 
\[
x_{13}\cdot 0-x_{23}\Psi_{1345}+x_{34}\Psi_{1235}-x_{35}\Psi_{1234}\in J_n.
\]
Judicious use of Lemma \ref{lemma:normal} as above implies that the normal forms with respect to $J_n$ of $x_{34}\Psi_{1235}$ and $x_{35}\Psi_{1234}$ contain no terms divisible by $x_{23}$, hence, $\Psi_{1345}=0$. Likewise, the syzygy $R_{12345}^4$ implies 
\[
x_{14}\cdot 0-x_{24}\cdot 0+x_{34}\Psi_{1245}-x_{45}\Psi_{1234}\in J_n.
\]
Lemma  \ref{lemma:normal} as above implies that the normal form with respect to $J_n$ of $x_{45}\Psi_{1234}$ contains no term divisible by $x_{34}$, hence $\Psi_{1245}=0$. But then we must have $\Psi_{1235}=\Psi_{1234}=0$ as well.

Our next step is to show that $\Psi_{234l}=0$ for all $l\geq 5$. Indeed, the syzygy $R_{1234l}^2$ implies 
\[
x_{12}\Psi_{234l}-x_{23}\Psi_{124l}+x_{24}\Psi_{123l}-x_{2l}\cdot 0\in J_n.
\]
Again considering normal forms with respect to $J_n$ and applying Lemma \ref{lemma:normal}, no terms in the normal forms are divisible by $x_{12}$ except for $x_{12}\Psi_{234l}$, implying that $\Psi_{234l}=0$. To conclude that $\Psi_{134l}=0$ as well, we use syzygy $R_{1234l}^3$ and the relation
\[
x_{13}\cdot 0-x_{23}\Psi_{134l}+x_{34}\Psi_{123l}-x_{3l}\cdot 0\in J_n
\]
in a similar fashion.

We now make the important observation that for fixed $a<b<c<d<e$, if for three of the five choices of $i,j,k,l\in\{a,b,c,d,e\}$ we have $\Psi_{ijkl}=0$, it must in fact vanish for the other two choices of $i,j,k,l$ as well. Indeed, an appropriate syzygy of the type $R_{abcde}^r$ will involve three of the vanishing terms, along with the product of $\Psi_{ijkl}$  with a variable, implying that $\Psi_{ijkl}=0$. We can immediately apply this to conclude that for $i,j,k,l\in \{1,2,3,4,e\}$, \eqref{eqn:vanishing} holds.

For $4<d<e\leq n$, using the syzygy $R_{123de}^2$ gives us that
\[
x_{12}\cdot \Psi_{23de}-x_{23}\cdot \Psi_{12de}+x_{2d}\cdot 0 -x_{2e}\cdot 0\in J_n.
\]
Since no term of $\Psi_{12de}$ is divisible by $x_{12}$, Lemma \ref{lemma:normal} implies that $\Psi_{23de}=0$. This in turn implies that $\Psi_{12de}=0$, and hence, for $i,j,k,l\in \{1,2,3,d,e\}$, \eqref{eqn:vanishing} holds.

Next we claim that for $i,j,k,l\in\{1,2,c,d,e\}$, \eqref{eqn:vanishing} holds. Indeed, this follows directly from our observation above, since $\Psi_{12cd}=\Psi_{12ce}=\Psi_{12de}=0$. But then by a similar argument, we may conclude that \eqref{eqn:vanishing} holds for all $i,j,k,l$.

It remains to be seen that we can modify $\rho$ by derivations to also obtain that $g_i=0$ for $i=1,\ldots,n$. First, we may add derivations of the form  
$f\cdot \frac{\partial}{\partial y_{i}}$ for $i\neq 2$ to $\rho$ to achieve that no term of $g_2$ is divisible by $x_{12}$ or $x_{2i}$, $3\leq i \leq n$. 
Secondly, we may add derivations of the form $f\cdot \frac{\partial}{\partial y_{2}}$ to achieve that no term of $g_3$ is divisible by $x_{23}$. Now, the syzygy $R_{123}$ implies that
\[
x_{23}\cdot g_1-x_{13}\cdot g_2+x_{12}g_3\in J_n.
\]
But by our assumptions on the $g_i$, all three of these terms must already be in normal form with respect to $J_n$, and since the latter two contain no monomials divisible by $x_{23}$, we get $g_1=0$. Similarly, $g_3=0$, and hence, $g_2=0$. Now, considering the syzygy $R_{12m}$ and substituting in for $\Psi_{ijkl}=0$ and $g_1=g_2=0$, we get that $x_{12}\cdot g_m\in J_n$ for all $m>2$. Hence $g_m=0$, completing the proof.
\end{proof}

\begin{rem}
The above proof fails if $n=5$. In that case a straightforward calculation shows that $T^1(S_n/J_n)$ is generated by the homomorphism sending 
\begin{align*}
\Phi_{2345}&\mapsto y_1\\
\Phi_{1345}&\mapsto -y_2\\
\Phi_{1245}&\mapsto y_3\\
\Phi_{1235}&\mapsto -y_4\\
\Phi_{1234}&\mapsto y_5.\\
\end{align*}
\end{rem}

\begin{rem}
The proof of Theorem \ref{thm:2} also gives an algebraic proof that the coordinate ring of $\GR$ in its Pl\"ucker embedding is rigid for $n\geq 5$. Svanes had previously shown \cite{svanes:75a} that (with the exception of $G(2,4)$) the coordinate ring of any Grassmannian in its Pl\"ucker embedding is rigid by using vanishing results of certain cohomology groups on the Grassmannian.
\end{rem}

\section{Geometric Results}\label{sec:geom}
There are two natural projective varieties which we can associate to the ideal $J_n$: the vanishing locus $X$ of $J_n$ in $\PP^{ {n \choose 2}+n-1}$, and the vanishing locus $X'$ of $J_n$ in $\PP^{n\choose 2}\times \PP^{n-1}$. In the case of $X$, we are viewing $S_n$ and $J_n$ as having the standard grading, whereas in the case of $X'$ we are using the bigrading $\deg x_{ij}=(1,0)$, $\deg y_i=(0,1)$.
Concretely, we have
\begin{align*}
&X=(\spec S_n/J_n\setminus Z)//\KK^*
\qquad \qquad&&X'=(\spec S_n/J_n\setminus Z')//(\KK^*)^2
\end{align*}
where $Z$ is the origin in $\spec S_n=\Aff^{n\choose 2}\times \Aff^n$, and $Z'$ is the union of $\{0\}\times \Aff^n$ with $\Aff^{ n\choose 2} \times \{0\}$.
Note that the variety $X'$ is nothing other than the projectivization of the bundle $\Q^*$. In particular, $X'$ is non-singular.
On the other hand, The variety $X$ comes with a natural rational map $\phi:X\dashrightarrow \GR$. The locus where $\phi$ is defined is simply the bundle $\Q^*$; the indeterminacy locus of $\phi$ is a copy of $\PP^{n-1}$. A straightforward calculation shows that this indeterminacy locus is exactly the singular locus of $X'$.

Our first geometric result is that relative complete intersections in $X$ and $X'$ degenerate to particularly nice Stanley-Reisner schemes: 
\begin{prop}\label{prop:g1}
Fix $0\leq k \leq 2n-3$, and consider generic hypersurfaces \[H_1,\ldots,H_k\subset \PP^{ {n\choose 2}+n-1}\] satisfying $d=\sum_{i=1}^k d_i\leq 2n-3$, where $d_i=\deg H_i$ . Then the projective variety 
\[
X\cap H_1\cap \ldots \cap H_k
\]
has anticanonical bundle $\CO(2n-3-d)$ and hence is Fano if and only if $d< 2n-3$. Furthermore, this variety degenerates to 
\[\PP(\K_n*\partial \Delta_{d_1-1}*\ldots * \partial\Delta _{d_k-1}*\Delta_{2n-4-d})\]
 and has unobstructed deformations as a subvariety of projective space. Finally, this variety is non-singular if and only if $k>n-1$.
\end{prop}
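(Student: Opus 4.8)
The plan is to derive all four assertions from Theorem~\ref{thm:1}, adjunction, and a tangent space computation along $\operatorname{Sing}X=\PP^{n-1}=\{x_{ij}=0\ \text{for all }i,j\}\subset X$.

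\emph{Canonical bundle and the Fano property.} First I would show $S_n/J_n$ is Gorenstein and compute its $a$-invariant. By Theorem~\ref{thm:1} the ring $S_n/J_n$ degenerates flatly to $A_{\K_n*\Delta_{2n-4}}\cong A_{\K_n}\otimes_\KK\KK[z_1,\dots,z_{2n-3}]$. Since $\K_n$ is a simplicial sphere it is Gorenstein$^{*}$, so $A_{\K_n}$ is Gorenstein with $a$-invariant $0$ (the $h$-vector of a simplicial sphere is symmetric with top entry $1$), hence $A_{\K_n*\Delta_{2n-4}}$ is Gorenstein with $a$-invariant $-(2n-3)$. As the Gorenstein property is open in flat families, and as the $a$-invariant is determined by the (degeneration-invariant) Hilbert series together with the Krull dimension $3n-5$, the ring $S_n/J_n$ is Gorenstein with $\omega_{S_n/J_n}\cong(S_n/J_n)(3-2n)$, so $\omega_X\cong\CO_X(3-2n)$. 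Moreover $X$ is Cohen--Macaulay by Corollary~\ref{cor:cm} and regular in codimension one, since $\operatorname{Sing}X=\PP^{n-1}$ (seen below) has codimension $2n-5\geq 2$; thus $X$ is a normal, Gorenstein, integral projective variety. For generic $H_i$ of degree $d_i$ the forms $H_1,\dots,H_k$ are a regular sequence on the Cohen--Macaulay domain $S_n/J_n$ (note $k\leq d\leq 2n-3<\dim S_n/J_n$), so $Y:=X\cap H_1\cap\dots\cap H_k$ is a Gorenstein complete intersection in $X$ of dimension $3n-6-k\geq n-3\geq2$, hence integral and normal by Bertini. Adjunction gives $\omega_Y\cong\omega_X(d)|_Y\cong\CO_Y(d-(2n-3))$, so the anticanonical bundle of $Y$ is $\CO_Y(2n-3-d)$, which is ample precisely when $d<2n-3$.

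\emph{Degeneration and unobstructedness.} The $2n-3$ variables $x_{12},x_{23},\dots,x_{(n-1)n},y_2,\dots,y_{n-1}$ are exactly those occurring in no minimal generator of $\operatorname{in}_\prec J_n=I_{\K_n*\Delta_{2n-4}}$; they are the vertices of the $\Delta_{2n-4}$-factor. Choose disjoint subsets $B_1,\dots,B_k$ of these with $|B_i|=d_i$ (possible as $d\leq 2n-3$) and let $z_{B_i}$ be the product of the variables in $B_i$. Taking $G$ to be the Gröbner basis of $J_n$ from Theorem~\ref{thm:1}, the set $G\cup\{z_{B_1},\dots,z_{B_k}\}$ has pairwise coprime leading terms in every new S-pair (no free variable occurs in a leading term of $G$, and distinct $z_{B_i}$ are coprime monomials), so by Buchberger's criterion it is a Gröbner basis of $J_n+(z_{B_1},\dots,z_{B_k})$, whose initial ideal is the Stanley--Reisner ideal of $\K_n*\partial\Delta_{d_1-1}*\dots*\partial\Delta_{d_k-1}*\Delta_{2n-4-d}$ (deleting from $\Delta_{2n-4}$ the faces containing each $B_i$). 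In particular $(z_{B_1},\dots,z_{B_k})$ is again a regular sequence on $S_n/J_n$, so both it and a generic tuple lie in the irreducible parameter space $U$ of $k$-tuples of forms of degrees $d_1,\dots,d_k$ forming a regular sequence on $S_n/J_n$; over $U$ the sections $X\cap H_1\cap\dots\cap H_k$ form a flat family of constant Hilbert polynomial. Restricting to a general curve in $U$ through $(z_{B_1},\dots,z_{B_k})$ exhibits $Y$ as degenerating to $V(J_n+(z_{B_1},\dots,z_{B_k}))$, which Gröbner-degenerates to $\PP(\K_n*\partial\Delta_{d_1-1}*\dots*\partial\Delta_{d_k-1}*\Delta_{2n-4-d})$. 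By Lemma~\ref{lemma:unobstructed} the Hilbert scheme is smooth at this Stanley--Reisner point, hence on a neighbourhood of it containing points isomorphic to $Y$, so $Y$ has unobstructed deformations as a subvariety of projective space.

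\emph{The smoothness criterion.} By Bertini $Y$ is smooth away from $\operatorname{Sing}X$, so $\operatorname{Sing}Y\subseteq Y\cap\PP^{n-1}$. On $\PP^{n-1}$ every $\Phi_{ijkl}$ and every $f_r$ vanishes identically, so $Y\cap\PP^{n-1}$ is the complete intersection in $\PP^{n-1}$ of the generic forms $H_i|_{\PP^{n-1}}$ of degrees $d_i$, which is empty exactly when $k>n-1$; for $k>n-1$ this shows $Y$ is smooth. When $k\leq n-1$, fix $p\in Y\cap\PP^{n-1}$ lying over $(0,y)$ with $y\neq0$ in the affine cone: a direct computation shows the differentials $df_r|_{(0,y)}=\sum_j(\pm y_j)\,dx_{rj}$ satisfy only the relation coming from the syzygy $\sum_i y_if_i=0$ and are otherwise independent, so $\dim T_pX=\binom{n}{2}$ (and in particular $\operatorname{Sing}X=\PP^{n-1}$ since $\binom{n}{2}>3n-6$ for $n\geq5$); each of the $k$ generic hypersurfaces through $p$ imposes a transverse linear condition (the differentials at $p$ of degree-$d_i$ monomials span the cotangent space), giving $\dim T_pY=\binom{n}{2}-k>3n-6-k=\dim Y$. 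Thus $Y$ is singular at every point of the nonempty set $Y\cap\PP^{n-1}$, so $Y$ is non-singular if and only if $k>n-1$.

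I expect the main obstacle to be the last paragraph: pinning down $T_pX$ along $\PP^{n-1}$ and checking that a generic degree-$d_i$ hypersurface through $p$ really imposes an honest transverse condition on this large tangent space. The remaining ingredients — openness of the Gorenstein locus, the adjunction and Bertini steps, and the Buchberger check for the monomial section — are routine.
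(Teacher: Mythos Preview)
Your argument is correct and follows the same overall architecture as the paper's proof: degenerate via Theorem~\ref{thm:1}, invoke Lemma~\ref{lemma:unobstructed} for unobstructedness, read off the canonical bundle from the Gorenstein degeneration plus adjunction, and handle smoothness with Bertini. Two steps differ in execution. For the canonical bundle, the paper computes $\omega$ on the Stanley--Reisner special fibre (citing \cite[Proposition~2.1]{ilten:14a}) and then transports it to $X$ by a deformation-theoretic line-bundle-lifting argument, using injectivity of the forgetful map from deformations of pairs together with the cohomology vanishing in Lemma~\ref{lemma:unobstructed}; your route through openness of the Gorenstein locus and constancy of the $a$-invariant under flat degeneration is more elementary and sidesteps that machinery entirely. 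For non-singularity the paper simply writes ``a straightforward application of Bertini's theorem,'' whereas you actually carry out the tangent-space count along $\operatorname{Sing}X=\PP^{n-1}$; note that in the singular direction you only need the inequality $\dim T_pY\ge \dim T_pX-k=\binom{n}{2}-k>3n-6-k=\dim Y$, so the transversality of the $dH_i|_p$ that you flagged as a potential obstacle is in fact unnecessary.
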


\begin{prop}\label{prop:g2}
For $0\leq k \leq 2n-3$, consider generic hypersurfaces \[H_1,\ldots,H_k\subset \PP^{n\choose 2}\times \PP^{n-1}\] satisfying $d=\sum_{i=1}^k d_i\leq n-1$
and $e=\sum_{i=1}^k e_i\leq n-2$, where $(d_i,e_i)$ is the bidegree of $H_i$.
Then the projective variety 
\[
X\cap H_1\cap \ldots \cap H_k
\]
is non-singular with anticanonical bundle $\CO(n-1-d,n-2-e)$, and hence is Fano if and only if $d<n-1$ and $e<n-2$. Furthermore, this variety degenerates to
\[\PP'(\K_n*\partial \Delta_{d_1,e_1}*\ldots * \partial\Delta _{d_k,e_k}*\Delta_{n-1-d,n-2-e})\]
  and has unobstructed deformations as a subvariety of $\PP^{n\choose 2}\times \PP^{n-1}$.
\end{prop}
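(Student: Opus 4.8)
The plan is to run the argument for Proposition \ref{prop:g1} in the bigraded setting, with $S_n$ graded by $\deg x_{ij}=(1,0)$, $\deg y_i=(0,1)$. Write $X'$ for the projectivization of $\Q^*$, so that $X'\subset\PP^{n\choose 2}\times\PP^{n-1}$ has bihomogeneous coordinate ring $S_n/J_n$; fix generic bihomogeneous forms $F_i$ of bidegree $(d_i,e_i)$, put $H_i=V(F_i)$, and set $Y=X'\cap H_1\cap\cdots\cap H_k=V_{X'}(F_1,\dots,F_k)$. First I would dispose of smoothness and the canonical bundle. Since $\Q^*$ has rank $n-2$, $X'=\PP(\Q^*)$ is a $\PP^{n-3}$-bundle over the smooth variety $\GR$, hence smooth; each linear system $|\CO(d_i,e_i)|$ on $\PP^{n\choose 2}\times\PP^{n-1}$ is base-point free, so repeated application of Bertini's theorem shows $Y$ is smooth. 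The projective-bundle formula together with $\omega_{\GR}\cong\CO(-n)$ and $\det\Q^*\cong\CO(-1)$ gives $\omega_{X'}\cong\CO(-(n-1),-(n-2))$; adjunction then yields $\omega_Y\cong\bigl(\omega_{X'}\otimes\CO(d,e)\bigr)\big|_Y\cong\CO(-(n-1-d),-(n-2-e))$, which is anti-ample exactly when $d<n-1$ and $e<n-2$.

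The heart of the proof is the degeneration. By Theorem \ref{thm:1}, $J_n$ admits a Gr\"obner degeneration to $I_{\K_n*\Delta_{2n-4}}$, and since every generator of $J_n$ is bihomogeneous this is a degeneration of bigraded rings; reading off $I_{\K_n}$ from the Lemma in \S\ref{sec:assoc}, the apex simplex $\Delta_{2n-4}$ is $\Delta_{n-1,n-2}$, with $x$-apex variables $x_{12},\dots,x_{(n-1)n}$ and $y$-apex variables $y_2,\dots,y_{n-1}$, so that $X'$ degenerates flatly to $\PP'(\K_n*\Delta_{n-1,n-2})$. I would then choose the $F_i$ generic enough to cut both $\spec(S_n/J_n)$ and $\spec(A_{\K_n*\Delta_{2n-4}})$ regularly --- finitely many open conditions --- so that, by the standard Gr\"obner argument using that both rings are Cohen--Macaulay (Corollary \ref{cor:cm}, and shellability of $\K_n*\Delta_{2n-4}$), the Hilbert polynomial of the section agrees on the general and special fibers and $Y$ degenerates flatly to the generic hypersurface section of $\PP'(\K_n*\Delta_{n-1,n-2})$ of the given bidegrees. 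Finally, since $d\le n-1$ and $e\le n-2$, I would partition the apex vertices into disjoint groups of sizes $(d_1,e_1),\dots,(d_k,e_k)$ together with a leftover group of size $(n-1-d,n-2-e)$, and let $M_i$ be the monomial on the $i$-th group; the $M_i$ form a regular sequence on $A_{\K_n*\Delta_{2n-4}}$ because the apex variables span a polynomial subring, so specializing each generic section $F_i$ to $M_i$ is flat, and the resulting special fiber is $A_{\K_n*\Delta_{2n-4}}/(M_1,\dots,M_k)$, which is the Stanley--Reisner ring of $\K_n*\partial\Delta_{d_1,e_1}*\cdots*\partial\Delta_{d_k,e_k}*\Delta_{n-1-d,n-2-e}$. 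Composing the two flat families yields the asserted degeneration.

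It remains to transfer unobstructedness, where I would invoke Lemma \ref{lemma:unobstructed}: the special fiber $\PP'(\K_n*\partial\Delta_{d_1,e_1}*\cdots*\partial\Delta_{d_k,e_k}*\Delta_{n-1-d,n-2-e})$ has unobstructed deformations as a subscheme of $\PP^{n\choose 2}\times\PP^{n-1}$, and $H^1$ and $H^2$ of its structure sheaf vanish. Hence the relevant Hilbert scheme is smooth at this point, so it is smooth along a neighborhood, into which $[Y]$ falls once the $F_i$ are chosen close enough to the $M_i$; thus $Y$ is unobstructed as well (see \cite{kleppe:14a}). The main obstacle I anticipate is precisely this flatness bookkeeping in the composite degeneration: one must pick the $F_i$ simultaneously generic for Bertini, for regularity on both $S_n/J_n$ and $A_{\K_n*\Delta_{2n-4}}$, and for landing in the smooth locus of the Hilbert scheme, and then check that neither the Gr\"obner step nor the specialization of $F_i$ to $M_i$ alters the Hilbert polynomial. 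Identifying the apex partition and the quotient $A_{\K_n*\Delta_{2n-4}}/(M_1,\dots,M_k)$, and computing $\omega_{X'}$, are routine by comparison.
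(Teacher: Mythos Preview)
Your proposal is correct and, for the degeneration and unobstructedness claims, follows essentially the paper's route: degenerate $S_n/J_n$ bihomogeneously via Theorem \ref{thm:1}, send the hypersurface equations to disjoint squarefree monomials supported on the apex variables, and invoke Lemma \ref{lemma:unobstructed}. The paper packages the flatness check more briskly than you do---it simply observes that the family is a relative complete intersection in a flat family, which subsumes your two-stage Hilbert-polynomial argument---but the content is the same, and your identification of the apex simplex as $\Delta_{n-1,n-2}$ on the variables $x_{12},x_{23},\dots,x_{(n-1)n}$ and $y_2,\dots,y_{n-1}$ is exactly right.

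Where you genuinely diverge is the anticanonical computation. You compute $\omega_{X'}$ directly from the projective-bundle formula using $\omega_{\GR}\cong\CO(-n)$ and $\det\Q^*\cong\CO(-1)$, then apply adjunction. The paper instead argues on the degenerate side: the Stanley--Reisner ring of $\K_n$ is Gorenstein (as $\K_n$ is a sphere), so the special fiber has a well-defined canonical bundle, computed there to be $\CO(1-n,2-n)$; this is then shown to lift \emph{uniquely} to the total space of the degeneration via the injectivity of the forgetful map from deformations of pairs to abstract deformations (using the cohomology vanishing of Lemma \ref{lemma:unobstructed}). Your approach is shorter and more transparent for $X'$, which is smooth and has an explicit bundle description; the paper's approach has the advantage of treating $X$ and $X'$ uniformly, which matters since $X$ is singular and no projective-bundle formula is available there.
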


\begin{proof}[Proof of Propositions \ref{prop:g1} and \ref{prop:g2}]
The bi-graded coordinate ring of $X'$ is just $S_n/J_n$ with grading on the variables as discussed above. By Theorem \ref{thm:1}, this ring has a bihomogeneous degeneration to the Stanley-Reisner ring of $\K_n*\Delta_{n-1,n-2}$. Hence, we get a degeneration of $X'$ to $\PP'(\K_n*\Delta_{n-1,n-2})$. To get a degeneration of $X'\cap H_1\cap \ldots \cap H_k$, we simply degenerate the $k$ equations for $H_i$ to square-free monomials of the same bidegree with disjoint support, and also disjoint from the support of $I_{\K_n}$. The resulting specialization is exactly \[\PP'(\K_n*\partial \Delta_{d_1,e_1}*\ldots * \partial\Delta _{d_k,e_k}*\Delta_{n-1-d,n-2-e}).\]
The underlying family is flat since it is a relative complete intersection in a flat family.
To get a degeneration of $X\cap H_1\cap \ldots \cap H_k$, we argue similarly, but forgetting the bidegree.

The Stanley-Reisner schemes occurring as the special fibers in these degenerations are unobstructed (see Lemma \ref{lemma:unobstructed}), so it follows that $X\cap H_1\cap \ldots \cap H_k$ and 
$X'\cap H_1\cap \ldots \cap H_k$ are unobstructed as well.

Now, the Stanley-Reisner ring of $\K_n$ is Cohen-Macaulay and Gorenstein, from which it follows that the total coordinate ring of the degeneration above is also Gorenstein. On   
$\PP(\K_n*\Delta_{2n-4})$ and $\PP'(\K_n*\Delta_{n-1,n-2})$ we respectively have canonical bundles
$\CO(3-2n)$ and $\CO(1-n,2-n)$, cf.~\cite[Proposition 2.1]{ilten:14a}. But this must also be true for the total spaces of the degeneration of $X$ and $X'$s. Indeed, the map of deformation functors from deformations of pairs to abstract deformations is injective by the exact sequence of \cite[(3.30)]{sernesi:06a} (which also holds in the singular setting with appropriate modification, see e.g.~\cite[\S 3]{altmann:10a}), and the cohomology vanishing of Lemma \ref{lemma:unobstructed}. Hence, any line bundle on the central fiber of our degeneration lifts uniquely to the total space.
The claim concerning the anticanonical bundle now follows by adjunction, cf. \cite[Proposition 2.4]{altman:70a}.

Finally, the claim in Proposition \ref{prop:g1} regarding non-singularity is a straightforward application of Bertini's theorem.
\end{proof}

\begin{rem}\label{rem:g}
Let $Y$ be a $(3n-2-k)$-dimensional toric Fano variety whose moment polytope is the reflexive polytope $P$, see e.g.~\cite{cox:11a}. Consider a generic Fano variety $V$ of the same deformation type as  
\[
X\cap H_1\cap \ldots \cap H_k
\]
from Proposition \ref{prop:g1}. This proposition implies a combinatorial criterion that guarantees the existence of a degeneration from $V$ to $Y$. Indeed, such a degeneration exists if $P$ admits a unimodular regular triangulation whose underlying simplicial complex is of the form $\K_n*\partial \Delta_{d_1-1}*\ldots * \partial\Delta _{d_k-1}*\Delta_{2n-4-d}$, see e.g. \cite[Proposition 3.3]{ilten:14a}. Proposition \ref{prop:g2} may be used in a similar fashion to find degenerations to toric varieties embedded in products of projective spaces. We leave the details to the reader.
\end{rem}
\begin{ex}
Consider the lattice polytope $Q\subset \RR^5$ whose vertices are given by the columns of the following matrix:
\begin{align*}
\left(\begin{array}{c c c c c c c c c c c c c}
1& 1& 0& -1& 0& 1& 0& 0& -1& 0 &0&1&0\\ 
0& -1& 1& 1& 1& 1& 2& -1& -1&0&0&-1&0\\ 
0& 0& 0& 0& -1& -1& -1& 1& 1&0&0&0&0\\
0& 0& 0& 0&  0&  0&  0& 0& 0&1&-1&1&0\\
2& 4& 4& 4&  4&  3&  4& 3& 4&4& 4&4&0\\
\end{array}\right)
\end{align*}
Let $\phi:\RR^5\to \RR^4$ be the projection onto the first four factors. Then $P=\phi(Q)$ is a four-dimensional reflexive polytope, and one easily checks that the regular unimodular triangulation given by the images of the lower facets of $Q$ is equal to $\K_6*\Delta_0$ as an abstract simplicial complex. Hence, by the above remark, a Fano variety with the same deformation type as a codimension $11$ linear section of $X$ degenerates to the toric variety whose moment polytope is $P$.
\end{ex}

Especially in light of Remark \ref{rem:g}, it is interesting to ask if $X\cap H_1\cap \ldots \cap H_k$ or $X'\cap H_1\cap \ldots H_k$ is generic for its deformation type. We can answer this question in the affirmative when intersecting with $0$ or $1$ hypersurface:
\begin{thm}\label{thm:g}
Assume that $n>5$.
The varieties $X$ and $X'$ are rigid. Given a hypersurface $H$ in either $\PP^{ {n\choose 2}+n-1}$ or $\PP^{n\choose 2}\times \PP^{n-1}$ not containing $X$ or $X'$, any deformation of $X\cap H$ or $X'\cap H$, respectively, is again the intersection of $X$ or $X'$ with a hypersurface $\widetilde H\in |H|$. 
\end{thm}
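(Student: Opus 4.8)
The plan is to translate each geometric statement into a cotangent cohomology computation on the affine cone $\spec S_n/J_n$, using the rigidity result of Theorem \ref{thm:2} together with the cohomology vanishing already established in Lemma \ref{lemma:unobstructed}. First I would prove rigidity of $X$ and $X'$. Since $X=(\spec S_n/J_n\setminus Z)//\KK^*$ and $X'=(\spec S_n/J_n\setminus Z')//(\KK^*)^2$, deformations of $X$ (resp.\ $X'$) are governed by the graded (resp.\ bigraded) pieces of $T^1$ of the cone sitting in the appropriate (internal) degrees, plus possibly contributions from $H^1(X,\mathcal{O}_X)$ and $H^1(X,T_X)$ type terms. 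More precisely, I would use the local-to-global type exact sequence relating $T^1(X)$ to $T^1(S_n/J_n)$ in the relevant graded strata and to $H^i(X,\mathcal{O}_X)$. By Theorem \ref{thm:2}, $T^1(S_n/J_n)=0$ entirely for $n\geq 6$, so all graded pieces vanish; and by Lemma \ref{lemma:unobstructed} (applied with $k=0$, so to $\PP(\K_n*\Delta_{2n-4})$ and $\PP'(\K_n*\Delta_{n-1,n-2})$) we get $H^1(\mathcal{O})=H^2(\mathcal{O})=0$ for the degenerations, hence by semicontinuity for $X$ and $X'$ themselves. Combining these, $T^1(X)=T^1(X')=0$, giving rigidity.

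Next I would handle the hypersurface sections. Write $Y=X$ or $X'$, and $H$ a hypersurface of (bi)degree $d$ (resp.\ $(d,e)$) not containing $Y$, so $W:=Y\cap H$ is a Cartier divisor in $Y$. I want to show every deformation of $W$ arises as $Y\cap\widetilde H$ with $\widetilde H\in|H|$. The natural tool is the deformation theory of the pair $(Y,W)$: there is an exact sequence (the one cited in the proof of Propositions \ref{prop:g1}--\ref{prop:g2}, from \cite[(3.30)]{sernesi:06a}, suitably modified in the singular case via \cite[\S 3]{altmann:10a})
\[
\Hom(\mathcal{O}_Y(-W),\mathcal{O}_W)\to \Def(W)\to \Def(Y)
\]
and more precisely a sequence comparing deformations of $W$, deformations of the pair, and deformations of $Y$. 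Since $Y$ is rigid ($\Def(Y)=0$ in the relevant sense, as just shown), every deformation of $W$ lifts to a deformation of the pair $(Y,W)$, i.e.\ to a deformation of $W$ as a divisor inside the fixed $Y$. Such deformations of a Cartier divisor are classified by $H^0(Y,\mathcal{O}_Y(W))$ modulo the action of automorphisms, i.e.\ by the linear system $|H|$ — provided $H^1(Y,\mathcal{O}_Y)=0$, which holds by the vanishing above, and provided the restriction $H^0(Y,\mathcal{O}_Y(W))\to H^0(W,\mathcal{O}_W(W))$ behaves well, again controlled by $H^1(Y,\mathcal{O}_Y)=H^1(Y,\mathcal{O}_Y(-W)\otimes\mathcal{O}_Y(W))$. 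I would make this precise by chasing the long exact sequence associated to $0\to\mathcal{O}_Y\to\mathcal{O}_Y(W)\to\mathcal{O}_W(W)\to 0$ and its first-order analogue, concluding that the map $|H|\to\Def(W)$ is surjective (indeed an isomorphism onto the image after removing automorphism directions).

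The main obstacle I anticipate is the bookkeeping in the singular case: $X$ is singular (its singular locus is the $\PP^{n-1}$ indeterminacy locus of $\phi$), so the classical pair-deformation sequence of \cite{sernesi:06a} must be replaced by its cotangent-complex version, and one must check that the relevant $T^2$ obstruction groups and the ``mixed'' terms $T^1(Y,\mathcal{O}_W)$ etc.\ still vanish or map as needed. This is exactly the kind of modification flagged as ``see e.g.~\cite[\S 3]{altmann:10a}'' in the proof of Propositions \ref{prop:g1}--\ref{prop:g2}, so I would lean on that reference; the substantive inputs — $T^1(S_n/J_n)=0$ from Theorem \ref{thm:2}, Cohen-Macaulayness from Corollary \ref{cor:cm}, and $H^{1}=H^2=0$ of the structure sheaf from Lemma \ref{lemma:unobstructed} — are all already in hand, so the remaining work is homological algebra rather than new computation. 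A secondary subtlety is ensuring $H$ is a \emph{generic} enough, or at least sufficiently ample, hypersurface that $W$ is a genuine Cartier divisor avoiding the singular locus of $X$ in the needed way; since the statement only asks about hypersurfaces not containing $X$ or $X'$ and deformations as abstract schemes, I would phrase the Cartier-divisor argument on $X'$ (which is smooth) and transfer to $X$ via the birational map, or argue directly on $X$ using that its singularities are sufficiently mild (Cohen-Macaulay, in fact the cone over a Gorenstein Stanley-Reisner ring) for the divisor-class argument to go through.
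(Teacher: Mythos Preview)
Your overall architecture matches the paper's: establish $H^2(Y,\CO_Y)=0$ (via degeneration and semicontinuity from Lemma~\ref{lemma:unobstructed}), use Cohen--Macaulayness of $S_n/J_n$ for the depth condition, feed in $T^1(S_n/J_n)=0$ from Theorem~\ref{thm:2}, and then argue that the forgetful map from embedded deformations of $W$ in $Y$ to abstract deformations of $W$ is smooth. The paper packages this last step as Lemma~\ref{lemma:comparison}, whose proof shows $T^1(Y,\CO_W)=0$ by passing through Kleppe's comparison $T^1(A,B)^H\twoheadrightarrow T^1(Y,\CO_W)$ and then killing $T^1(A,B)$ via \cite[Lemma~5.1]{ilten:15a}. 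Your rigidity argument for $X$ and $X'$ is essentially the $l=0$ case of the same machinery and is fine.

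The gap is in your hypersurface step. The exact sequence you wrote, with third term $\Def(Y)$, is not the relevant one (and Sernesi's (3.30), used in the proof of Propositions~\ref{prop:g1}--\ref{prop:g2}, concerns lifting \emph{line bundles} along deformations of $Y$, not lifting deformations of $W$ to the pair). The long exact sequence attached to the closed embedding $W\hookrightarrow Y$ reads
\[
\cdots\longrightarrow H^0(W,N_{W/Y})\longrightarrow T^1(W)\longrightarrow T^1(Y,\CO_W)\longrightarrow\cdots,
\]
so the obstruction to realising an abstract deformation of $W$ as an embedded one lives in $T^1(Y,\CO_W)$, not in $T^1(Y,\CO_Y)$. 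Rigidity of $Y$ gives only the latter vanishing; from $0\to\CO_Y(-H)\to\CO_Y\to\CO_W\to 0$ you see that $T^1(Y,\CO_W)=0$ additionally requires $T^2(Y,\CO_Y(-H))=0$, i.e.\ vanishing of $T^2$ of the cone in the relevant degree. This is exactly why hypothesis~(3) of Lemma~\ref{lemma:comparison} asks for $T^i(A)=0$ for $1\le i\le l+1$: with $l=1$ one needs both $T^1(S_n/J_n)=0$ and $T^2(S_n/J_n)=0$. Your proposal never invokes any $T^2$ vanishing, so as written the lifting step does not go through; to repair it you must either supply $T^2(S_n/J_n)=0$ (which the paper has available via the unobstructedness of $\K_n$ in Theorem~\ref{thm:unobstructed}) or argue directly, as the paper does, through the Kleppe comparison for $T^1(A,B)$.
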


\begin{proof}
Let $Y$ be either $X$ or $X'$. Then $H^2(Y,\CO_Y)=0$ by the degeneration from Proposition \ref{prop:g1} or \ref{prop:g2}, semicontinuity of cohomology, and the second claim of Lemma \ref{lemma:unobstructed}.
Furthermore, since $S_n/J_n$ is Cohen-Macaulay (cf.~Corollary \ref{cor:cm}), it follows that the depth of the irrelevant ideal (the ideal of $Z$ or $Z'$) on $S_n/J_n$ is equal to the dimension of $S_n/J_n$, which is at least $12$. The claim of the Theorem now follows by combining Theorem \ref{thm:2} with Lemma \ref{lemma:comparison} below, where we apply the lemma with $V$ equal to $\PP^{ {n\choose 2}+n-1}$ or $\PP^{n\choose 2}\times \PP^{n-1}$, and $l=1$.
\end{proof}

\begin{lemma}[cf. {\cite[Proposition 5.2]{ilten:15a}}]\label{lemma:comparison}
Let $V$ be a smooth complete toric variety with homogeneous coordinate ring $C$ and irrelevant ideal $I$. Let $Y$ be a subscheme of $V$ corresponding to the ideal $J\subset C$, and set $A=C/J$. 
Consider a relative complete intersection $W=V(f_1,\ldots,f_l)\subset Y$ for homogeneous elements $f_i\in C$. Set $B=A/\langle f_1,\ldots,f_k\rangle$. If 
\begin{enumerate}
\item $\depth_I B\geq 3$;
\item $H^2(Y,\CO_Y)=0$; and
\item $T^i(A)=0$ for $1\leq i \leq l+1$
\end{enumerate} 
then any deformation of $W$ is again a relative complete intersection in $Y$.
\end{lemma}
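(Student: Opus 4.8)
The plan is to run the standard comparison argument between deformations of the pair $(W\subset Y)$ and abstract deformations of $W$, exploiting the hypotheses on cotangent cohomology and depth. First I would set up the relevant exact sequences. Since $V$ is a smooth toric variety, the sheaf-theoretic deformation theory of $W$ as a subscheme of $V$ is governed by the normal sheaf, and there is an exact sequence of deformation functors relating deformations of $W$ in $V$, deformations of the pair, and deformations of $Y$ in $V$; concretely one uses the conormal sequence for $W\subset Y\subset V$. The hypothesis $T^i(A)=0$ for $1\le i\le l+1$ forces, via the standard long exact sequence for the composition $A\to B$ and the fact that $W$ is a relative complete intersection in $Y$ (so the conormal module $\langle f_1,\ldots,f_l\rangle/\langle f_1,\ldots,f_l\rangle^2$ is free over $B$), that the cotangent cohomology of $B$ in low degrees is controlled entirely by the ``Koszul'' part coming from the $f_i$. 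In particular $T^1(B)$ receives a surjection (in fact, the relevant piece is) $\bigoplus_i B(\deg f_i)$ modulo the image of the module of derivations, and $T^2(B)$ vanishes in the pieces not coming from relations among the $f_i$.

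Next I would pass from algebra to geometry using the depth hypothesis. The condition $\depth_I B\ge 3$ lets one compare the module-level $T^i(B)$ with the sheaf-level $T^i(W)$ and $H^j(W,\mathcal N)$ for small $i,j$: the local cohomology spectral sequence (as in \cite[\S 9]{cox:11a}, or the sheafification exact sequences of \cite[\S 3]{altmann:10a}) shows that $H^0$ and $H^1$ of the relevant sheaves agree with the degree-zero parts of the corresponding graded modules, with the error terms being local cohomology $H^1_I$ and $H^2_I$, which vanish in the needed range precisely because $\depth_I B\ge 3$. Combined with $H^2(Y,\mathcal O_Y)=0$, which kills the obstruction to lifting a deformation of $Y$-in-$V$ trivially (equivalently, guarantees $H^1$ of the structure sheaf behaves well so that line bundles and the embedding lift), this shows that every first-order deformation of $W$ as an abstract scheme is induced by a first-order deformation of the pair, and that such deformations are unobstructed in the pair direction over those of $Y$.

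Finally I would assemble these facts. Since $T^1(A)=0$, the scheme $Y\subset V$ is rigid, so any deformation of the pair $(W\subset Y)$ keeps $Y$ fixed; and since $T^2(A)=0$ in the relevant degrees, deformations of $W$ inside the fixed $Y$ are unobstructed and, because the conormal module is free, are cut out by perturbing $f_1,\ldots,f_l$ within their degrees — i.e.\ $W$ deforms only to other relative complete intersections $V(\widetilde f_1,\ldots,\widetilde f_l)$ in $Y$. Together with the surjectivity of pair-deformations onto abstract deformations established via the depth estimate and $H^2(Y,\mathcal O_Y)=0$, this gives the conclusion. The main obstacle is the bookkeeping in the second step: correctly identifying which graded pieces of $T^1(B)$ and $T^2(B)$ survive, and checking that the depth bound is exactly what is needed to trade the module-level statements for the sheaf-level statements about $W$ — this is where one must be careful, since the ambient $V$ is a product of projective spaces in our application and the irrelevant ideal is not a complete intersection.
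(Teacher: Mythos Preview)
Your proposal has the right ingredients and in broad outline matches the paper's argument: the Jacobi--Zariski sequence for $\KK\to A\to B$, the free conormal module of a relative complete intersection, and the depth comparison to pass from graded modules to sheaves. The paper organizes these more economically, however. Rather than analyzing $T^i(B)$ and then separately invoking rigidity of $Y$ and unobstructedness of the pair, it observes that the conclusion follows once the forgetful map $\Def_{W/Y}\to\Def_W$ from the local Hilbert functor to abstract deformations is \emph{smooth}, and that the obstruction to this smoothness is exactly $T^1(Y,\CO_W)$. Conditions (1) and (2) are then packaged, via results of Kleppe \cite{kleppe:14a}, into a surjection $T^1(A,B)^H\to T^1(Y,\CO_W)$ with $H$ the Picard torus of $V$; condition (3) is used to show $T^1(A,B)=0$ directly (this is where the full range $1\le i\le l+1$ is consumed, climbing the length-$l$ Koszul resolution of $B$ over $A$, cf.~\cite[Lemma 5.1]{ilten:15a}).

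One place your sketch drifts: the assertion that ``$T^2(A)=0$ in the relevant degrees'' makes deformations of $W$ inside the fixed $Y$ unobstructed misidentifies the role of the higher vanishings $T^i(A)=0$. They are not there to control obstructions in $\Def_{W/Y}$ (which live in $H^1(W,\mathcal{N}_{W/Y})$ and are not addressed by the hypotheses), but precisely to force $T^1(A,B)=0$ via the coefficient long exact sequence applied along the Koszul resolution. Likewise, your use of $H^2(Y,\CO_Y)=0$ as an obstruction-killing device for lifting $Y$ is beside the point; in the paper it enters only through the Kleppe comparison that links the module $T^1(A,B)$ to the sheaf $T^1(Y,\CO_W)$. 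With these roles corrected your route can be made to work, but it is a step longer than targeting $T^1(Y,\CO_W)$ directly.
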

\begin{proof}
We will show that the forgetful map $\Def_{W/Y} \to \Def_W$ from the local Hilbert functor of $W$ in $Y$ to the deformation functor of $W$ is smooth, which implies the claim. But this map is smooth if $T^1(Y,\CO_W)=0$. Now, Lemmas 4.2 and 4.3 and Proposition 4.24 from \cite{kleppe:14a} show that the first two conditions imply a surjection $T^1(A,B)^H\to T^1(Y,\CO_W)$, where $H$ is the Picard torus of $V$.
Note that we are using the embedding of $Y$ in $V$ to ensure that $Y$ is a geometric quotient.
 Now, the third claim implies the vanishing of $T^1(A,B)$, see e.g. \cite[Lemma 5.1]{ilten:15a}.
\end{proof}

\begin{rem}Results for higher codimension relative complete intersections similar to those of Theorem \ref{thm:g} would follow if we could show the vanishing of certain higher cotangent cohomology modules of $S_n/J_n$. The details are left to the reader.
\end{rem}

\bibliographystyle{alpha}
\bibliography{sr-quotientbundle}
\end{document}